\newtheorem{thm}{Theorem}[section]
\newtheorem{lem}[thm]{Lemma}
\newtheorem{defn}[thm]{Definition}
\newtheorem{cor}[thm]{Corollary}
\newtheorem{prop}[thm]{Proposition}
\newtheorem{rem}[thm]{Remark}
\newcommand{\setof}[1]{\left\{ {#1}\right\}}
\newcommand{\mvmap}{\Longrightarrow}
\newcommand{\supp}[1]{\left\lfloor {#1}\right\rfloor}
\newcommand{\capp}[1]{\left\lceil {#1}\right\rceil}
\newcommand{\Tub}{\text{Tub}}
\newcommand{\inv}{^{-1}}
\newcommand{\bv}{{\bf v}}
\DeclareMathOperator{\HG}{H}
\newcommand{\N}{{\mathbb{N}}}
\newcommand{\R}{{\mathbb{R}}}
\newcommand{\U}{{\mathbb{U}}}
\newcommand{\cA}{{\mathcal A}}
\newcommand{\cB}{{\mathcal B}}
\newcommand{\cL}{{\mathcal L}}
\newcommand{\cU}{{\mathcal U}}
\newcommand{\cV}{{\mathcal V}}
\newcommand{\cX}{{\mathcal X}}
\newcommand{\cY}{{\mathcal Y}}
\newcommand{\cZ}{{\mathcal Z}}
\def\setof#1{\left\{{#1}\right\}}
\def\w#1{\mbox{#1}}
\newcommand{\heta}{\gamma}
\newcommand{\ANR}{\mathbf{ANR}}
\begin{document}

\title{Reconstructing Functions from Random Samples} 
%[Reconstructing the Homology of Functions from Samples]

\author{Steve Ferry}
\address{Department of Mathematics, Rutgers University, 
Piscataway, NJ  08854, USA}
\email{sferry@math.rutgers.edu}

\author{Konstantin Mischaikow}
\address{Department of Mathematics, Rutgers University, 
Piscataway, NJ  08854, USA}
\email{mischaik@math.rutgers.edu}
%\thanks{This work was partially supported by NSF grants DMS-0915019, DMS-1125174, and CBI-0835621 and by contracts from DARPA and AFOSR}

\author{Vidit Nanda}
\address{Department of Mathematics, The University of Pennsylvania, 
Philadelphia, PA  19104, USA}
\email{vnanda@sas.upenn.edu}
%\thanks{This work was partially supported by NSF grants DMS-0915019 and by contracts from DARPA and AFOSR}

\date{}

\begin{abstract}
From a sufficiently large point sample lying on a compact Riemannian submanifold of Euclidean space, one can construct a simplicial complex which is homotopy-equivalent to that manifold with high confidence. We describe a corresponding result for a Lipschitz-continuous function between two such manifolds. That is, we outline the construction of a simplicial map which recovers the induced maps on homotopy and homology groups with high confidence using only finite sampled data from the domain and range, as well as knowledge of the image of every point sampled from the domain. We provide explicit bounds on the size of the point samples required for such reconstruction in terms of intrinsic properties of the domain, the co-domain and the function. This reconstruction is robust to certain types of bounded sampling and evaluation noise.
\end{abstract}

\keywords {55U05, 55U10, 55U15, 62-07, Homology, Homotopy, Nonlinear maps}
\maketitle

%%%%%%%%%%%%%%%%%%%%%%%%%%%%%%%%%%%%%%%%%%%%%%%%%

\section{Introduction} \label{sec:intro}
	
The use of algebraic topological methods for the analysis of nonlinear data has become a subject of considerable interest with a wide variety of promising applications \cite{carlsson:09, edelsbrunner:harer:10, ghrist:08, KMM04}. With modern technology, large and high-dimensional datasets are easily collected. However, in many cases the data are generated by a nonlinear system with many fewer degrees of freedom than the ambient dimension, and thus one may expect that the data actually lie on a much lower dimensional manifold. In this setting, it seems reasonable that the geometry generated by the data may provide insight concerning the original system. Given that such data are typically finite and noisy, relatively crude invariants -- such as homology or homotopy groups -- appear to be natural tools for capturing aspects of their underlying topological structure.  The same heuristic arguments extend to transformations from one dataset to another. In such analysis, one is interested not only in the geometry associated with the data, but also in the action of an unknown nonlinear process on the data. This action may be partially characterized by the maps which it induces on homology and homotopy groups.

Given these arguments, an obvious but mostly unresolved mathematical issue is to quantify the extent to which  we can extract correct topological features from noisy data. An important first step in this direction is the following result due to Partha Niyogi, Steve Smale, and Shmuel Weinberger \cite{NSW06}. Let $\R^n$ denote Euclidean $n$-dimensional space with the standard metric, and consider a compact $k$-dimensional Riemannian submanifold $\cX \subset \R^n$.  The {\em condition number} $\nicefrac{1}{\tau_\cX}$ of $\cX$ is defined as follows: $\tau_\cX$  is the largest positive real number such that for any $r \in (0,\tau_\cX)$, the normal bundle of radius $r$ about $\cX$ can be embedded in $\R^n$. 

\begin{thm}
\label{thm:nswmain}
{\em (\cite[Thm 3.1]{NSW06})}
{
Let $\cX$ be a compact $k$-dimensional Riemannian submanifold of $\R^n$ with condition number $\nicefrac{1}{\tau_\cX}$. Given 
\begin{enumerate}
\item some probability parameter $\delta \in (0,1]$, 
\item a radius $\epsilon < \nicefrac{\tau_\cX}{2}$, and 
\item a finite set $X \subset \cX$ of independent and identically distributed (i.i.d.) uniformly sampled points,
\end{enumerate}
let $\cU(X)$ denote the union of $n$-dimensional open $\epsilon$-balls centered at the points in $X$. If the sample size $\#X$ is larger than a bounding value $\beta_\cX(\epsilon,\delta)$ (see Definition~\ref{defn:boundingfunction}), then $\cU_\epsilon(X)$ is homotopy-equivalent to $\cX$ with probability exceeding $(1-\delta)$.
}
\end{thm}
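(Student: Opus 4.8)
The plan is to follow the strategy of Niyogi--Smale--Weinberger \cite{NSW06} and split the argument into a deterministic geometric statement and a probabilistic covering estimate. The first move is a reduction: fix a density threshold $\delta_0 = \delta_0(\epsilon,\tau_\cX)\in(0,\epsilon]$ and prove the purely geometric claim that if $X\subset\cX$ is \emph{$\delta_0$-dense} (every point of $\cX$ lies within Euclidean distance $\delta_0$ of $X$), then $\cU_\epsilon(X)$ is homotopy-equivalent to $\cX$. Granting this, the theorem reduces to choosing the bounding value $\beta_\cX(\epsilon,\delta)$ large enough that an i.i.d.\ uniform sample of that size is $\delta_0$-dense with probability exceeding $1-\delta$.

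For the geometric half, observe that each point of a ball $B_\epsilon(x_i)$ with $x_i\in\cX$ lies within $\epsilon<\tau_\cX/2$ of $\cX$, so the whole union $\cU_\epsilon(X)$ sits inside the open tubular neighborhood $\Tub_{\tau_\cX}(\cX)$, on which the nearest-point projection $\pi\colon\cU_\epsilon(X)\to\cX$ is well-defined and continuous; and since $X$ is $\delta_0$-dense with $\delta_0\le\epsilon$ we also have $\cX\subseteq\cU_\epsilon(X)$ with $\pi|_\cX=\mathrm{id}$. I would then show that the straight-line homotopy
\[
H(x,t)=(1-t)\,x+t\,\pi(x),\qquad x\in\cU_\epsilon(X),\ t\in[0,1],
\]
takes its values in $\cU_\epsilon(X)$, which exhibits a strong deformation retraction of $\cU_\epsilon(X)$ onto $\cX$ and hence the desired homotopy equivalence. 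The point is that the segment from $x$ to $p:=\pi(x)$ lies in the affine normal space $p+N_p\cX$ at distance at most $d(x,\cX)<\epsilon<\tau_\cX$ from $\cX$, hence entirely inside the fiber $\pi^{-1}(p)$; so it suffices to prove that $\pi^{-1}(p)\cap\cU_\epsilon(X)$ is star-shaped with respect to $p$ for every $p\in\cX$. Equivalently, one bounds the ``height'' $\|y-p\|$ of a point $y$ over $p$ lying in some $B_\epsilon(x_i)$, compares it against the distance from $p$ to a nearby sample point supplied by $\delta_0$-density, and uses the condition-number bound (which forces the second fundamental form of $\cX$ to be controlled by $1/\tau_\cX$) to keep the curvature corrections in this comparison small; the hypothesis $\epsilon<\tau_\cX/2$ is exactly what makes these corrections manageable, and a suitably small choice of $\delta_0$ in terms of $\epsilon$ and $\tau_\cX$ absorbs the remaining slack. (This half could also be packaged as a nerve-lemma argument for the cover $\{B_\epsilon(x_i)\}$ of $\cU_\epsilon(X)$, but identifying its nerve with the homotopy type of $\cX$ requires essentially the same estimate.)

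For the probabilistic half, choose a maximal $(\delta_0/2)$-separated subset $\{p_1,\dots,p_N\}\subset\cX$; then the balls $B_{\delta_0/2}(p_j)$ cover $\cX$, so any sample meeting every one of them is automatically $\delta_0$-dense. A lower bound on the normalized volume of each such ball follows from a volume-comparison estimate, since for radii below $\tau_\cX$ the curvature constraints implied by the condition number distort volumes of small metric balls by only a bounded factor; this yields a uniform lower bound $\mu_{\min}>0$ on the probability that a single sampled point lands in a prescribed net-ball, with $N$ and $\mu_{\min}$ explicit in $k$, $\tau_\cX$, $\mathrm{vol}(\cX)$ and $\epsilon$. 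A union bound then gives
\[
\Pr\bigl[\,\text{some ball }B_{\delta_0/2}(p_j)\text{ is missed}\,\bigr]\ \le\ N\,(1-\mu_{\min})^{\#X},
\]
and requiring the right-hand side to be smaller than $\delta$ produces an explicit bounding value $\beta_\cX(\epsilon,\delta)$, of the order of $\mu_{\min}^{-1}\log(N/\delta)$.

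I expect the main obstacle to be the star-shapedness claim in the geometric half. That is the single place where the precise interplay between $\epsilon$, the density threshold $\delta_0$ and the condition number $1/\tau_\cX$ has to be pinned down, and it is what forces the quantitative hypothesis $\epsilon<\tau_\cX/2$; once that estimate is available, the reduction and the covering argument are routine applications of convexity, volume comparison and a coupon-collector union bound.
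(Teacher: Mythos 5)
Your proposal is correct and follows essentially the same route as the paper, which simply decomposes the theorem into the two Niyogi--Smale--Weinberger ingredients: a deterministic statement that an $\nicefrac{\epsilon}{2}$-dense sample makes the nearest-point projection $\pi_\cX\colon \cU_\epsilon(X)\to\cX$ a deformation retraction (proved there exactly via star-shapedness of the fibers), and a covering-number/union-bound estimate showing $\#X>\beta_\cX(\epsilon,\delta)$ forces that density with probability exceeding $1-\delta$. Your sketch matches both halves, including the form $\mu_{\min}^{-1}\log(N/\delta)$ of the bounding function.
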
 
Thus, the union of balls of a suitably chosen radius around a sufficiently large point sample suffices to recover the homotopy type of that manifold with high confidence. From a computational perspective, recall that the {\em nerve} of a cover \cite{munkres,spanier} is the abstract simplicial complex where each $d$-dimensional simplex corresponds to an intersection of $d+1$ sets of that cover. If we let $N(X)$ denote the nerve corresponding to the cover of $\cU(X)$ by its constituent open balls, then one obtains an isomorphism $\HG_*(\cX) \simeq \HG^\Delta_*(N(X))$ between the singular homology of $\cX$ and the simplicial homology of $N(X)$ by using the {\em nerve lemma}. Thus, it is possible to successfully compute the homology of an unknown manifold $\cX$ with high probability from a sufficiently dense point sample $X \subset \cX$. 

An obvious next step is to obtain bounds on the probability of reconstructing, up to homotopy, a continuous function between Riemannian manifolds from images of dense samples. This is the focus of our work with the main result as follows.
\begin{thm}
\label{thm:finalres}
Let $\cX \subset \R^n$ and $\cY \subset \R^m$  be compact Riemannian submanifolds with condition numbers $\nicefrac{1}{\tau_{\cX}}$ and $\nicefrac{1}{\tau_{\cY}}$ respectively and let $f:\cX \to \cY$ be a Lipschitz continuous function with Lipschitz constant bounded above by some $\kappa \geq 0$. Given
\begin{enumerate}
\item probability parameters $\delta_\cX,\delta_\cY \in (0,1]$,
\item radii $\epsilon_\cX < \nicefrac{\tau_\cX}{2}$ and $\epsilon_\cY < \nicefrac{\tau_\cY}{2}$ satisfying $4\kappa\cdot\epsilon_\cX < \epsilon_\cY$, and
\item finite sets $X \subset \cX$ and $Y \subset \cY$ of independent and identically distributed (i.i.d.) uniformly sampled points,
\end{enumerate}
let $N(X)$ and $N(Y)$ be nerves of the covers generated by open balls of radius $\epsilon_\cX$ and $\epsilon_\cY$ around $X$ and $Y$ respectively. If $\#X > \beta_\cX(\epsilon_\cX, \delta_\cX)$ and $\#Y > \beta_\cY(\epsilon_\cY, \delta_\cY)$, then there exists a simplicial map $\phi:N(X) \to N(Y)$ which 
\begin{enumerate}
\item recovers the homotopy class of $f$ with probability exceeding $(1-\delta_\cX)(1-\delta_\cY)$, and
\item can be explicitly constructed using only $X$, $Y$, $\epsilon_\cX$, $\epsilon_\cY$, $\kappa$ and the restriction $f|_X$.
\end{enumerate}
\end{thm}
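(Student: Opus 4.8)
The plan is to build the simplicial map $\phi$ explicitly on vertices, to verify it is simplicial using the hypothesis $4\kappa\epsilon_\cX<\epsilon_\cY$, and then to compare $|\phi|$ with $f$ through the comparison maps supplied by the nerve lemma. Throughout, condition on the event $E=E_\cX\cap E_\cY$, where $E_\cX$ is the event that every point of $\cX$ lies within distance strictly less than $\tfrac{\epsilon_\cX}{2}$ of $X$, and $E_\cY$ is defined analogously. The proof of Theorem~\ref{thm:nswmain} shows that $\#X>\beta_\cX(\epsilon_\cX,\delta_\cX)$ forces the probability of $E_\cX$ to exceed $1-\delta_\cX$, so by independence of $X$ and $Y$ the probability of $E$ exceeds $(1-\delta_\cX)(1-\delta_\cY)$; we argue on $E$. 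On $E$ we have $\cX\subseteq\cU_{\epsilon_\cX}(X)$, the inclusion is a homotopy equivalence by Theorem~\ref{thm:nswmain} (using $\epsilon_\cX<\tfrac{\tau_\cX}{2}$), and the nerve lemma applied to the convex cover $\{B(x,\epsilon_\cX)\}_{x\in X}$ of $\cU_{\epsilon_\cX}(X)$ yields a homotopy equivalence $\cU_{\epsilon_\cX}(X)\to|N(X)|$; composing gives a reconstructing homotopy equivalence $\eta_\cX\colon\cX\to|N(X)|$, which is the barycentric coordinate map $\eta_\cX(z)=(\rho_x(z))_{x\in X}$ for a partition of unity subordinate to the cover (take $\rho_x(z)$ proportional to $\max\{\epsilon_\cX-\|z-x\|,0\}$, normalized to sum to $1$), so that $\rho_x(z)>0$ exactly when $z\in B(x,\epsilon_\cX)$. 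Define $\eta_\cY\colon\cY\to|N(Y)|$ the same way.

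For each vertex $x\in X$, define $\phi(x)$ to be a point of $Y$ nearest to $f(x)$, with ties broken by a fixed order on $Y$; on $E$ this satisfies $\|f(x)-\phi(x)\|<\tfrac{\epsilon_\cY}{2}$, and the construction uses only $X$, $Y$, $\epsilon_\cY$ and $f|_X$. To check that $\phi$ is simplicial, let $\{x_0,\dots,x_d\}$ span a simplex of $N(X)$, so some point lies in every $B(x_i,\epsilon_\cX)$ and hence $\|x_i-x_0\|<2\epsilon_\cX$; then, by the Lipschitz bound and $4\kappa\epsilon_\cX<\epsilon_\cY$,
\[
\|\phi(x_i)-f(x_0)\|\le\|\phi(x_i)-f(x_i)\|+\kappa\|x_i-x_0\|<\tfrac{\epsilon_\cY}{2}+2\kappa\epsilon_\cX<\epsilon_\cY,
\]
so $f(x_0)\in\bigcap_i B(\phi(x_i),\epsilon_\cY)\neq\emptyset$ and $\{\phi(x_0),\dots,\phi(x_d)\}$ spans a simplex of $N(Y)$.

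It remains to show that $\phi$ recovers the homotopy class of $f$, that is, $|\phi|\circ\eta_\cX\simeq\eta_\cY\circ f$ as maps $\cX\to|N(Y)|$. Fix $p\in\cX$, and put $C(p)=\{x\in X:\|p-x\|<\epsilon_\cX\}$ and $S(p)=\{y\in Y:\|f(p)-y\|<\epsilon_\cY\}$. For $x\in C(p)$ we have $\|\phi(x)-f(p)\|\le\|\phi(x)-f(x)\|+\kappa\|x-p\|<\tfrac{\epsilon_\cY}{2}+\kappa\epsilon_\cX<\epsilon_\cY$, while $\|y-f(p)\|<\epsilon_\cY$ for $y\in S(p)$ by definition; hence $f(p)\in B(z,\epsilon_\cY)$ for every $z\in S(p)\cup\phi(C(p))$, so $S(p)\cup\phi(C(p))$ spans a single simplex $\Delta(p)$ of $N(Y)$. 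Now $\eta_\cY(f(p))$ is a convex combination of the vertices in $S(p)$ and $|\phi|(\eta_\cX(p))$ is a convex combination of the vertices $\{\phi(x):x\in C(p)\}$, so both points lie in the closed geometric simplex $|\Delta(p)|\subseteq|N(Y)|$; therefore the straight-line homotopy
\[
H(p,t)=(1-t)\,\eta_\cY(f(p))+t\,|\phi|(\eta_\cX(p))
\]
stays in $|\Delta(p)|$, is continuous in $(p,t)$, and joins $\eta_\cY\circ f$ to $|\phi|\circ\eta_\cX$. Composing with homotopy inverses of $\eta_\cX$ and $\eta_\cY$ then shows that $\phi$ induces, under these reconstructions, exactly the maps induced by $f$ on all homotopy and homology groups, which gives part (1); part (2) is immediate from the construction of $\phi$.

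The main obstacle is the third step. The first two steps are essentially forced: $4\kappa\epsilon_\cX<\epsilon_\cY$ is precisely the slack needed both for $\phi$ to carry simplices to simplices and for $\eta_\cY\circ f$ and $|\phi|\circ\eta_\cX$ to lie in a common simplex at every point. The delicate part is pinning down the nerve-lemma equivalences $\eta_\cX,\eta_\cY$ concretely, via explicit partitions of unity, so that $|\phi|\circ\eta_\cX$ and $\eta_\cY\circ f$ can be compared coordinatewise, and then verifying that the linear homotopy never leaves $|N(Y)|$ --- the content of the common-carrier computation above. One should additionally confirm that the proof of Theorem~\ref{thm:nswmain} genuinely supplies $\tfrac{\epsilon}{2}$-density on an event of the stated probability, and that the hypotheses $\epsilon_\cX<\tfrac{\tau_\cX}{2}$ and $\epsilon_\cY<\tfrac{\tau_\cY}{2}$ place $\cU_{\epsilon_\cX}(X)$ and $\cU_{\epsilon_\cY}(Y)$ within reach of $\cX$ and $\cY$, so that the deformation retractions onto the manifolds exist.
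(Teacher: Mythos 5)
Your proof is correct, and the construction of $\phi$ itself essentially coincides with the paper's: your nearest-point rule is one particular selector $h$ for the correspondence $\Delta$ of (\ref{eqn:Ldef}) (on the density event the nearest point of $Y$ to $f(x)$ lies within $\epsilon_\cY/2 < \rho$ of it), and your simpliciality check is the computation of Proposition~\ref{prop:gammasimp} with the witness $f(x_0)$ in place of $f(\pi_\cX(w))$. Where you genuinely diverge is in verifying that $\phi$ recovers $f$ up to homotopy. The paper works with maps \emph{out of} the nerve: it proves a controlled nerve lemma (Lemma~\ref{lem:contner}, which occupies all of Section~\ref{sec:blocknerve} and rests on the Vietoris--Smale theorem) producing equivalences $\zeta_Z:|N(Z)|\to\cU(Z)$ with $\zeta_Z(|\sigma|)\subset\capp{\sigma}$, and then invokes the Carrier Lemma (Lemma~\ref{lem:car}) in Proposition~\ref{prop:rechom} to conclude that $\zeta_Y\circ|\phi|$ and $g\circ\zeta_X$ are homotopic. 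You instead work with maps \emph{into} the nerve --- the canonical partition-of-unity maps $\eta_\cX,\eta_\cY$ --- and write down an explicit linear homotopy inside the common simplex spanned by $S(p)\cup\phi(C(p))$. This buys you two things: the control condition (that $\eta_\cX(z)$ is supported on exactly the balls containing $z$) is automatic rather than something to be engineered, and the homotopy is explicit rather than an appeal to the Carrier Lemma. The one input you lean on without proof is that the partition-of-unity map $\cU_{\epsilon_\cX}(X)\to|N(X)|$ is itself a homotopy equivalence; the bare nerve lemma only asserts abstract homotopy equivalence of the two spaces, and identifying the canonical map as the equivalence is precisely the strengthening (dual to Lemma~\ref{lem:contner}) that the paper devotes Section~\ref{sec:blocknerve} to. That statement is standard for finite convex open covers, so this is a citation gap rather than a mathematical one, but it should be made explicit since it is where the real topological content of the comparison step lives.
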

In particular, the morphisms induced by $\phi$ on the simplicial homology as well as homotopy groups of the nerves faithfully capture their singular counterparts induced by $f$ with high confidence. Note from Theorem \ref{thm:nswmain} that the upper bound on the probability of failing to produce $\phi$ is no larger than the corresponding bound on failing to reconstruct both $\cX$ and $\cY$ from the sample sets $X$ and $Y$. 

The third hypothesis in Theorem~\ref{thm:nswmain} which requires the sampled points to lie on the underlying manifold is too strong from the perspective of  practical sampling considerations.  A more reasonable hypothesis is that the data is {\em noisy} and lies near -- rather than on -- the respective underlying manifolds. This situation is also considered in \cite{NSW06}. Using their framework, we show that Theorem~\ref{thm:finalres} can be extended to this more general setting (see Theorem~\ref{thm:finalresnoise}).

The rest of the paper is organized as follows. In Section \ref{sec:nercar} we mention relevant definitions and tools from combinatorial algebraic topology. Section \ref{sec:defs} describes  results from \cite{NSW06} that are used in our work. The simplicial reconstruction of functions and its verification are presented in Section \ref{sec:proofs}. Section \ref{sec:noise} demonstrates the robustness of this reconstruction by providing a version of Theorem \ref{thm:finalres} which holds in the case of bounded sampling and evaluation noise. Our argument relies on a controlled version of the nerve lemma whose proof is described in Section \ref{sec:blocknerve}.

\section{Carriers and Nerves}\label{sec:nercar}
	 For the sake of completeness and to introduce relevant notation, we review some classical results from the theory of simplicial complexes. A much more complete treatment is available in standard texts, for instance \cite{munkres,spanier}.

Let $\U$ be any finite set whose elements we call {\em vertices}. A {\em simplicial complex} $K$ with vertex set $\U$ is a collection of nonempty subsets of $\U$ -- called {\em simplices} -- which contains all the vertices is closed under inclusion. More precisely, a collection $K$ of subsets of $\U$ is a simplicial complex if
\begin{itemize}
\item for each $u \in \U$, we have $\setof{u} \in K$, and
\item if $\sigma \in K$ and $\tau \subset \sigma$, then $\tau \in K$.
\end{itemize}
The {\em dimension} of each simplex $\sigma \in K$ is the natural number given by $\dim \sigma = \#\sigma - 1$ where $\#$ indicates cardinality. A {\em subcomplex} of $K$ is a sub-collection of simplices which forms a simplicial complex in its own right. The notation $K^d$ is used to indicate all simplices of dimension $d$ in $K$. Without loss of generality, we may identify $K^0$ with $\U$ by associating each $0$-dimensional simplex with the unique element of $\U$ which it contains. Thus, each simplex is uniquely determined by its constituent vertices. Given simplicial complexes $K$ and $L$, a {\em simplicial map} $\psi:K \to L$ associates to each vertex $v \in K^0$ a vertex $\psi(u) \in L^0$ so that for each simplex $\sigma \in K$ the image $\psi(\sigma)$ is a simplex in $L$.

\subsection{Carriers}

The {\em geometric realization} $|K|$ of a simplicial complex $K$ is defined (in \cite[Ch. 3.1]{spanier}, for instance) as the space of all maps $\alpha:K^0 \to [0,1]$ called {\em barycentric functions} such that for each $\alpha \in |K|$,
\begin{enumerate}
\item there exists $\sigma \in K$ with $\setof{u \in K^0 \mid \alpha(u) \neq 0} = \sigma$, and
\item the sum $\sum_{u \in K^0}\alpha(u)$ equals $1$. 
\end{enumerate}
The realization of a simplex $\sigma \in K$ is defined as the closed subset $|\sigma| \subset |K|$ consisting of all barycentric functions $\alpha \in |K|$ such that $\alpha(u) = 0$ whenever $u \notin \sigma$. Observe that if $\sigma \subset \tau$ then $|\sigma| \subset |\tau|$; if $\sigma \in K$ then $|\sigma|$ is contractible in $|K|$; and $|K| = \bigcup_{\sigma \in K}|\sigma|$.

A simplicial map $\psi: K \to L$ induces a continuous function $|\psi|:|K| \to |L|$ between geometric realizations defined as follows. For any $\alpha \in |K|$, the action of the barycentric function $|\psi|(\alpha) \in |L|$ on a vertex $v \in L^0$ is given by
\[
|\psi|(\alpha)(v) = \sum_{\psi(u)=v}\hspace{-1em}\alpha(u).
\]
It is readily seen from this definition that $|\psi|(|\sigma|) \subset |\psi(\sigma)|$ for each simplicial map $\psi:K \to L$ and each $\sigma \in K$. 

Let $\cU$ be any topological space and $K$ a simplicial complex. A {\em contractible carrier} $C:K \mvmap \cU$ from $K$ to $\cU$ assigns to each simplex $\sigma \in K$ a contractible subset $C(\sigma)$ of $\cU$ so that $C(\sigma) \subset C(\tau)$ whenever $\sigma \subset \tau$. A function $h:|K| \to \cU$ is {\em carried} by $C$ if $g(|\sigma|) \subset C(\sigma)$ for each simplex $\sigma \in K$. The following result is an extremely useful tool in combinatorial algebraic topology. We refer the reader to \cite{bjorner03} and the references therein for details.

\begin{lem}[\bf Carrier Lemma]
\label{lem:car}
Let $K$ be a simplicial complex, $\cU$ a topological space and $C:K \mvmap \cU$ a contractible carrier. Then, there exists a continuous function from $|K|$ to $\cU$ carried by $C$. Moreover, if two continuous functions $h, h':|K| \to \cU$ are carried by $C$, then
\begin{enumerate} 
\item they are homotopic, i.e., $h \sim h'$, and 
\item a homotopy $\Theta:|K| \times [0,1] \to S$ may be chosen so that for each $t \in [0,1]$ the section $\Theta(\ast,t):|K| \to \cU$ is also carried by $C$.
\end{enumerate}
\end{lem}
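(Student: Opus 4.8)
The plan is to prove both assertions by a single inductive device: build the required map on $|K|$ one simplex at a time, in order of increasing dimension, extending across each new simplex $\sigma$ by exploiting the fact that a map from a sphere into a \emph{contractible} space extends over the bounding ball. Throughout I will use that $K$ is finite, that $|K|=\bigcup_{\sigma\in K}|\sigma|$ with each $|\sigma|$ a closed topological ball of dimension $\dim\sigma$ whose topological boundary is $\partial|\sigma|=\bigcup_{\tau\subsetneq\sigma}|\tau|\cong S^{\dim\sigma-1}$, and that continuity of a map out of $|K|$ may therefore be checked on the finitely many closed cells $|\sigma|$ by the pasting lemma.

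For the existence of a carried map, I would define $h$ on each $|\sigma|$ by induction on $\dim\sigma$, preserving the invariant $h(|\sigma|)\subseteq C(\sigma)$. Suppose $h$ has been built on $\partial|\sigma|$; since $C(\tau)\subseteq C(\sigma)$ for every proper face $\tau\subsetneq\sigma$, the restriction $h|_{\partial|\sigma|}\colon S^{\dim\sigma-1}\to\cU$ already takes values in $C(\sigma)$. Fixing a contraction $R\colon C(\sigma)\times[0,1]\to C(\sigma)$ of $C(\sigma)$ and viewing the ball $|\sigma|$ as the cone on $\partial|\sigma|$, the assignment $(x,s)\mapsto R\bigl(h(x),s\bigr)$ extends $h$ continuously over $|\sigma|$ with image still inside $C(\sigma)$; for a vertex the boundary is empty and this reduces to choosing a point of $C(\{u\})$. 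The local prescriptions agree on overlaps, which lie in the lower skeleton, so gluing over the finitely many simplices yields a continuous $h\colon|K|\to\cU$ carried by $C$. The same induction, extending only over simplices not already handled, also yields the relative statement that a map defined and carried by $C$ on a subcomplex of $K$ extends to all of $|K|$, still carried by $C$.

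For the second assertion, given $h,h'\colon|K|\to\cU$ both carried by $C$, I would produce the homotopy $\Theta\colon|K|\times[0,1]\to\cU$ by the same method applied cell by cell. Define $\Theta$ on $|\sigma|\times[0,1]$ by induction on $\dim\sigma$, maintaining the invariant $\Theta(|\sigma|\times[0,1])\subseteq C(\sigma)$ together with the boundary conditions $\Theta(\ast,0)=h$ and $\Theta(\ast,1)=h'$. For a simplex $\sigma$, the set $|\sigma|\times[0,1]$ is a ball whose boundary sphere is $\bigl(|\sigma|\times\{0,1\}\bigr)\cup\bigl(\partial|\sigma|\times[0,1]\bigr)$; on this sphere $\Theta$ is already defined and lands in $C(\sigma)$, because $h$ and $h'$ are carried by $C$ and because $\Theta(|\tau|\times[0,1])\subseteq C(\tau)\subseteq C(\sigma)$ for each proper face $\tau$ by the inductive hypothesis. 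Coning off with a contraction of $C(\sigma)$ extends $\Theta$ over $|\sigma|\times[0,1]$ with image in $C(\sigma)$; for a vertex this step is just the choice of a path in $C(\{u\})$ joining $h(u)$ to $h'(u)$, which exists because $C(\{u\})$, being contractible, is path-connected. Gluing over the finitely many cells produces the homotopy $\Theta$ from $h$ to $h'$. Then, for every $t\in[0,1]$ and every $\sigma\in K$, the section satisfies $\Theta(\ast,t)(|\sigma|)=\Theta(|\sigma|\times\{t\})\subseteq\Theta(|\sigma|\times[0,1])\subseteq C(\sigma)$, so $\Theta(\ast,t)$ is carried by $C$, which is conclusion~(2).

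The main obstacle — really the only nontrivial point — is the extension step across a single ball: one must fill a map from the boundary sphere into $C(\sigma)$ over the whole ball, and this is exactly where contractibility of $C(\sigma)$ is used, the contraction furnishing the extension explicitly via the cone construction so that no obstruction-theoretic input is needed. Everything else is routine: identifying $|\sigma|$ and $|\sigma|\times[0,1]$ as balls with the indicated boundary spheres, checking that the inductively defined pieces match on their lower-dimensional overlaps, and — the point requiring the most care — phrasing the induction so that the ``carried by $C$'' invariant is genuinely preserved at every stage, hence valid for the final maps and their sections.
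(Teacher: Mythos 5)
Your proof is correct. The paper itself does not prove the Carrier Lemma---it simply cites \cite{bjorner03} for it---but your argument is the standard one for this result: a skeleton-by-skeleton induction in which the only substantive step is extending a map from $\partial|\sigma|$ (respectively from $\bigl(|\sigma|\times\{0,1\}\bigr)\cup\bigl(\partial|\sigma|\times[0,1]\bigr)$) over the ball $|\sigma|$ (respectively $|\sigma|\times[0,1]$) by coning off with a contraction of $C(\sigma)$, and the invariant $\Theta(|\sigma|\times[0,1])\subseteq C(\sigma)$ immediately yields that every section is carried by $C$. Note that the finiteness of $K$, which your pasting-lemma step uses, is guaranteed by the paper's standing assumption that the vertex set is finite.
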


\subsection{Nerves}

Let $\cU$ be a topological space equipped with a finite cover $\U$ consisting of subsets of $M$. The {\em nerve} of $\U$ is the simplicial complex $N(\U)$ with vertex set $\U$ where each subcollection $\sigma \subset \U$ constitutes a simplex if and only if the intersection $\bigcap_{u \in \sigma}u$ is a non-empty subset of $\cU$. We call this intersection the {\em support} of $\sigma$ and denote it by $\supp{\sigma} \subset \cU$. On the other hand, we also make use of the union $\capp{\sigma} = \bigcup_{u \in \sigma}u \subset \cU$. When considering an entire subcomplex $K \subset N(\U)$ rather than a single simplex, one defines $\capp{K}$ to be the union of supports of all simplices in $K$. It is easy to see\footnote{Note that the supports of $0$-simplices are maximal among all supports in the partial order induced by inclusion.} that $\capp{K} = \bigcup_{u \in K^0}u$, and that $\capp{N(\U)} = \cU$.

A cover $\U$ of a topological space $\cU$ is called {\em contractible} if the support $\supp{\sigma}$ of each $\sigma \in N(\U)$ is a contractible subset of $\cU$. For instance, if $\cU$ lies in a topological vector space and if each $u \in \U$ is convex, then all non-empty intersections are automatically convex and hence contractible. One reason to consider contractible nerves is the following classical result (see \cite{borsuk} or \cite[Thm. 15.21]{kozlov08}).
\begin{lem}[\bf Nerve Lemma]
{
Let $\cU$ be a paracompact topological space equipped with an open cover $\U$. If $\U$ is contractible, then the geometric realization $|N(\U)|$ of its nerve is homotopy-equivalent to $\cU$.
}
\end{lem}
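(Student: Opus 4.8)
This is a classical theorem, due originally to Borsuk \cite{borsuk} (see also \cite[Thm.\ 15.21]{kozlov08}); the plan is to recall the standard argument, both because it is short and because it is the template that the controlled refinement of Section~\ref{sec:blocknerve} will have to quantify. One produces mutually homotopy-inverse maps $\pi : \cU \to |N(\U)|$ and $\iota : |N(\U)| \to \cU$. For $\pi$, use paracompactness to choose a locally finite partition of unity $\{\rho_u\}_{u \in \U}$ subordinate to $\U$ --- so each $\rho_u : \cU \to [0,1]$ is continuous with $\overline{\{\rho_u \neq 0\}} \subset u$ and $\sum_{u} \rho_u \equiv 1$ --- and let $\pi(x) \in |N(\U)|$ be the barycentric function $u \mapsto \rho_u(x)$. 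This is well defined because the vertices $u$ at which $\rho_u(x) \neq 0$ all contain $x$, hence span a simplex of $N(\U)$; continuity and normalization are immediate.

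For $\iota$ I would invoke the Carrier Lemma (Lemma~\ref{lem:car}) over the barycentric subdivision $\operatorname{sd} N(\U)$, whose simplices are chains $\sigma_0 \subsetneq \cdots \subsetneq \sigma_k$ in $N(\U)$. The assignment $C : (\sigma_0 \subsetneq\cdots\subsetneq \sigma_k) \mapsto \supp{\sigma_0}$ is a contractible carrier: each support $\supp{\sigma_0}$ is contractible precisely because $\U$ is a contractible cover, and passing to a subchain either keeps the minimum or enlarges it --- and enlarging a simplex only shrinks its support --- so $C$ is monotone. The Carrier Lemma then yields a continuous $\iota$ from $|\operatorname{sd} N(\U)| = |N(\U)|$ to $\cU$ carried by $C$. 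To see $\pi\circ\iota \sim \operatorname{id}_{|N(\U)|}$, I would check that both maps are carried by the contractible carrier $(\sigma_0 \subsetneq\cdots\subsetneq\sigma_k) \mapsto |\Star(\sigma_0)|$, where $\Star(\sigma_0)$ denotes the closed star of $\sigma_0$: its realization is star-shaped about the barycenter of $\sigma_0$, hence contractible, and it is monotone just as $C$ was. The identity sends the realized chain into $|\sigma_k| \subseteq |\Star(\sigma_0)|$; and $\pi\circ\iota$ sends it into $\pi(\supp{\sigma_0})$, where for $x \in \supp{\sigma_0} = \bigcap_{u \in \sigma_0} u$ the simplex carrying $\pi(x)$ lies inside $\{u : x \in u\}$, a simplex that contains $\sigma_0$ and therefore sits in $\Star(\sigma_0)$. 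The Carrier Lemma then makes the two maps homotopic.

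The remaining identity $\iota\circ\pi \sim \operatorname{id}_\cU$ is the delicate point --- and, I expect, the main obstacle --- because both maps now emanate from $\cU$ rather than from a geometric realization, so Lemma~\ref{lem:car} does not apply directly. The standard remedy is to interpolate through the blow-up space $B = \bigcup_{\sigma \in N(\U)} \supp{\sigma} \times |\sigma| \subseteq \cU \times |N(\U)|$ with its two coordinate projections $q_\cU : B \to \cU$ and $q_N : B \to |N(\U)|$. A partition-of-unity argument using paracompactness (essentially the one behind $\pi$) shows $q_\cU$ is a homotopy equivalence, and $q_N$ is a homotopy equivalence because its fibre over an interior point of $|\sigma|$ is exactly the contractible support $\supp{\sigma}$ --- the second, essential use of the contractibility hypothesis. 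Composing a homotopy inverse of $q_N$ with $q_\cU$ reproduces, up to homotopy, the map $\iota$ and forces $\iota \circ \pi \sim \operatorname{id}_\cU$. I single this step out because it is where the geometric hypothesis on $\U$ does real work and where all the coherence bookkeeping for the homotopies is concentrated --- precisely the bookkeeping that Section~\ref{sec:blocknerve} must later carry out with explicit control.
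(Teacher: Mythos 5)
Your architecture is the right one, and your treatment of $\iota\circ\pi$ is in fact exactly the paper's: the blow-up space $B=\bigcup_{\sigma}\supp{\sigma}\times|\sigma|$ is the space $\cL\subset|N(\U)|\times\cU$ of Section~\ref{sec:blocknerve}, and your $q_N$, $q_\cU$ are its $\rho_1$, $\rho_2$. (The paper does not prove the classical statement itself --- it cites \cite{borsuk} and \cite[Thm.~15.21]{kozlov08} --- but its proof of the controlled refinement, Lemma~\ref{lem:contner}, proceeds through precisely this space.) Your constructions of $\pi$, of $\iota$ via the Carrier Lemma on $\operatorname{sd}N(\U)$, and of the homotopy $\pi\circ\iota\sim\operatorname{id}$ are correct, up to one small repair: for a cover that is not point-finite, $\setof{u : x\in u}$ need not be a simplex of $N(\U)$. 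What you should say is that $\tau_x\cup\sigma_0$ is a simplex, where $\tau_x=\setof{u:\rho_u(x)\neq 0}$ is finite by local finiteness and every element of $\tau_x\cup\sigma_0$ contains $x$; hence $|\tau_x|$ lies in the closed star of $\sigma_0$ as a face of $|\tau_x\cup\sigma_0|$.

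The genuine gap is the sentence asserting that $q_N$ is a homotopy equivalence \emph{because} its fibres are the contractible supports $\supp{\sigma}$. A continuous surjection with contractible point-inverses is not in general a homotopy equivalence; this implication is the entire content of the Nerve Lemma, and it is exactly the step the paper spends Section~\ref{sec:blocknerve} justifying. There the inference is made via the Vietoris--Smale theorem --- which requires the map to be \emph{proper}, forcing the preliminary replacement of open balls by shrunken closed balls --- and the resulting equivalence is then made controlled by an $\ANR$/mapping-cylinder induction over the skeleta of the nerve (Lemma~\ref{lem:mapcyl} and the proposition following it). In your general paracompact setting you must likewise invoke and verify the hypotheses of some theorem of this type: Vietoris--Begle/Smale, Dold's theorem applied to $q_N$ over the numerable cover of $|N(\U)|$ by open stars, or a skeletal induction using the homotopy extension property. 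Simply asserting the conclusion leaves the theorem unproved. Two smaller points: the claim that $q_\cU\circ\bar q_N$ reproduces $\iota$ up to homotopy also needs an argument (a carrier argument on $\operatorname{sd}N(\U)$ works); and once $\iota$ is known to be a homotopy equivalence, the cleanest finish is to observe that $\pi\circ\iota\sim\operatorname{id}$ from your third step forces $\pi$ to be a two-sided homotopy inverse of $\iota$, whence $\iota\circ\pi\sim\operatorname{id}_\cU$.
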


Since we always restrict our attention to finite covers by open balls in Euclidean space, we may obtain the following {\em controlled} version of the nerve lemma by strengthening its hypotheses.

\begin{lem}
\label{lem:contner}
Let $\U$ be a finite collection of open balls in Euclidean space and let $\cU$ be their union. Then, 
\begin{enumerate}
\item $|N(\U)|$ is homotopy-equivalent to $\cU$, and
\item a homotopy-equivalence $\zeta:|N(\U)| \to \cU$ may be chosen so that $\zeta(|\sigma|) \subset \capp{\sigma}$ for each simplex $\sigma \in N(\U)$.
\end{enumerate}
\end{lem}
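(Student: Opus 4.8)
My plan is to deduce part (1) directly from the Nerve Lemma and to prove part (2) by constructing $\zeta$ with the Carrier Lemma (Lemma~\ref{lem:car}) and then exhibiting an explicit homotopy inverse built from a partition of unity, so that part (1) is subsumed by part (2). For (1) on its own: each ball $u\in\U$ is convex, so for every $\sigma\in N(\U)$ the support $\supp{\sigma}=\bigcap_{u\in\sigma}u$ is a nonempty convex, hence contractible, set; thus $\U$ is a contractible open cover of the paracompact space $\cU$ and the Nerve Lemma gives $|N(\U)|\simeq\cU$.

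To build $\zeta$, set $N:=N(\U)$ and pass to the barycentric subdivision $\mathrm{sd}(N)$, whose vertices are the simplices of $N$, whose simplices are the chains $F=\{\tau_0\subsetneq\dots\subsetneq\tau_k\}$ in $N$, and with $|\mathrm{sd}(N)|=|N|$. I will use the carrier $C:\mathrm{sd}(N)\mvmap\cU$ given by $C(F)=\supp{\tau_0}$, the support of the $\subseteq$-minimal member of $F$. Each $\supp{\tau_0}$ is nonempty and convex (hence contractible), and deleting vertices from $F$ can only replace $\tau_0$ by a larger simplex, hence shrink its support, so $C$ is a contractible carrier; the Carrier Lemma produces a continuous $\zeta:|N|\to\cU$ carried by $C$. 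The required bound $\zeta(|\sigma|)\subseteq\capp{\sigma}$ then comes for free: the chains $F$ with $|F|\subseteq|\sigma|$ are exactly those with $\tau_k\subseteq\sigma$, and for such $F$ we have $C(F)=\supp{\tau_0}\subseteq u$ for any $u\in\tau_0\subseteq\sigma$, whence $C(F)\subseteq\capp{\sigma}$.

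To see $\zeta$ is a homotopy equivalence I will produce an inverse. Fix radial bump functions giving a partition of unity $\{\rho_u\}_{u\in\U}$ on $\cU$ with $\{\rho_u>0\}=u$ for each $u$, and define $\iota:\cU\to|N|$ by $\iota(x)(u)=\rho_u(x)$; the carrier of $\iota(x)$ is $\sigma_x:=\{u:x\in u\}\in N$. Of the two homotopies needed, $\zeta\iota\simeq\mathrm{id}_{\cU}$ is the easy one: since $\iota(x)\in|\sigma_x|$, the carrier bound gives $\zeta(\iota(x))\in\capp{\sigma_x}=\bigcup_{u\ni x}u$, which is star-shaped about $x$, so $(x,t)\mapsto(1-t)\,x+t\,\zeta(\iota(x))$ is a homotopy inside $\cU$ from $\mathrm{id}_{\cU}$ to $\zeta\iota$. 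For $\iota\zeta\simeq\mathrm{id}_{|N|}$ I will again invoke the Carrier Lemma, now for the carrier $F\mapsto|L_{\tau_0}|$ on $\mathrm{sd}(N)$, where $L_{\tau_0}$ is the subcomplex of $N$ consisting of those $\rho$ with $\supp{\rho}\cap\supp{\tau_0}\neq\emptyset$ (a subcomplex, since $\supp{\,}$ reverses inclusions). Both $\mathrm{id}_{|N|}$ and $\iota\zeta$ are carried by it: $|F|\subseteq|\tau_k|\subseteq|L_{\tau_0}|$ because $\supp{\tau_k}\subseteq\supp{\tau_0}$ is nonempty, so $\tau_k\in L_{\tau_0}$; and $\zeta(|F|)\subseteq\supp{\tau_0}$, while for $x\in\supp{\tau_0}$ one has $x\in\supp{\sigma_x}\cap\supp{\tau_0}$, so $\iota(x)\in|\sigma_x|\subseteq|L_{\tau_0}|$. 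The decisive point is that $L_{\tau_0}$ is precisely the nerve of the cover $\{u\cap\supp{\tau_0}\}_u$ of the convex set $\supp{\tau_0}$ by convex sets, so a second application of the Nerve Lemma shows $|L_{\tau_0}|\simeq\supp{\tau_0}$ is contractible; monotonicity in $F$ is immediate. The uniqueness clause of the Carrier Lemma then gives $\iota\zeta\simeq\mathrm{id}_{|N|}$, so $\zeta$ and $\iota$ are mutually inverse homotopy equivalences and the carrier bound has already been established, proving (2), and (1) follows.

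The step I expect to be the main obstacle is the homotopy $\iota\zeta\simeq\mathrm{id}_{|N|}$: no straight-line argument is available inside the polyhedron $|N|$, and the naive candidate carriers — open stars of $\sigma$, or the unions $\capp{\sigma}$ — are not contractible. What makes it work is recognizing the set $|L_{\tau_0}|$ into which $\iota\zeta$ lands as a nerve of a convex cover of a convex set, so contractibility can be certified by reusing the Nerve Lemma; it is exactly this that forces $\iota$ to be built from a partition of unity whose supports are the balls themselves, so that the carrier of $\iota$ and the supports $\supp{\tau_0}$ appearing in $C$ are compatible. I would also verify the (routine) point-set hypotheses — paracompactness of $\cU$ and of each $\supp{\tau_0}$, which holds since these are open subsets of Euclidean space — and confirm that a partition of unity with the stated support property exists. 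An alternative route, closer to the \textit{block} terminology of the section, would pass to the Mayer--Vietoris blow-up $\bigcup_{\sigma\in N}|\sigma|\times\supp{\sigma}\subseteq|N|\times\cU$ and show both projections are homotopy equivalences; the argument above avoids having to analyze that space directly.
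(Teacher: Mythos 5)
Your proof is correct, but it takes a genuinely different route from the one in the paper. The paper forms the blow-up complex $\cL\subset|N(\U)|\times\cU$ of pairs $(\alpha,x)$ for which some $\sigma$ has $\alpha\in|\sigma|$ and $x\in\supp{\sigma}$, proves that both projections are homotopy equivalences via the Vietoris--Smale theorem (after shrinking the open balls to closed ones to make the maps proper), and then runs an induction over skeleta using mapping cylinders and the homotopy extension property for $\ANR$ pairs to produce a homotopy inverse $\iota$ of $\rho_1$ with $\iota(|\sigma|)\subset\rho_1^{-1}(|\sigma|)=|\sigma|\times\capp{\sigma}$; the controlled equivalence is $\zeta=\rho_2\circ\iota$. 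You instead work on the barycentric subdivision and get everything from the two lemmas already quoted in Section~\ref{sec:nercar}: the carrier $F\mapsto\supp{\tau_0}$ produces $\zeta$ with the containment $\zeta(|\sigma|)\subset\capp{\sigma}$ essentially by inspection; the partition-of-unity map $\iota$ gives $\zeta\circ\iota\simeq\mathrm{id}_{\cU}$ by a straight-line homotopy (star-shapedness of $\capp{\sigma_x}$ about $x$ is exactly where convexity of the balls enters); and $\iota\circ\zeta\simeq\mathrm{id}_{|N(\U)|}$ follows from the Carrier Lemma once $|L_{\tau_0}|$ is recognized as the nerve of a convex cover of the convex set $\supp{\tau_0}$, hence contractible by a second use of the ordinary Nerve Lemma. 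The individual verifications (monotonicity of both carriers, $L_{\tau_0}$ being a subcomplex since support reverses inclusion, both composites being carried) all check out. Your argument is more elementary and avoids the $\ANR$/Vietoris--Smale machinery and the open-to-closed modification entirely; what the paper's heavier approach buys is generality and strength --- the inductive mapping-cylinder argument needs only that the blocks $\rho^{-1}(|\sigma|)$ be contractible $\ANR$s rather than convex, and it produces controlled deformation retractions rather than bare homotopies --- but for the lemma as stated your route suffices. The alternative you sketch in your final sentence is precisely the paper's construction.
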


Note that the first conclusion of Lemma \ref{lem:contner} follows easily from the traditional nerve lemma; it is the second assertion which plays a fundamental role in the proofs of our main results. A detailed verification of the controlled nerve lemma is presented in Section \ref{sec:blocknerve}.
      
\section{Recovering Manifolds from Samples}\label{sec:defs}
         Our main result focuses on recovering -- up to homotopy type -- a Lipschitz continuous function between finitely sampled unknown compact Riemannian submanifolds of Euclidean space. In order to accomplish this, we first use the finite sampled data to construct a simplicial complex homotopically faithful to the underlying manifold. In this section, we briefly survey the process from \cite{NSW06} which constructs such a simplicial complex.

Throughout this section, let $\cX \subset \R^n$ be a compact Riemannian submanifold with condition number $\nicefrac{1}{\tau_\cX}$.
\begin{defn}
\label{defn:boundingfunction}
{\em
The {\em bounding function} $\beta_\cX:\R^+ \times (0,1] \to \R$ is given by
\begin{align}
\label{eqn:boundfunc}   
 \beta_\cX(\epsilon, \delta) = \beta_1\left[\log(\beta_2) + \log(\nicefrac{1}{\delta}) \right], 
\end{align} 
where
\begin{align*}
\beta_1 &= \frac {\w{vol}(\cX)} {\cos^k \left( \arcsin(\nicefrac{\epsilon}{8\tau_{\cX}}) \right)\cdot\w{vol}(\cB^k_{\epsilon/4})}, \\
 \beta_2 &= \frac{\w{vol}(\cX)}{\cos^k \left( \arcsin(\nicefrac{\epsilon}{16\tau_{\cX}}) \right)\cdot\w{vol}(\cB^k_{\epsilon/8})},
\end{align*}
and $\w{vol}(\cB^k_\epsilon)$ denotes the volume of the standard $k$-dimensional Euclidean ball of radius $\epsilon$.
}
\end{defn}

Let $\cB_r(x)$ denote the $n$-dimensional Euclidean open ball of radius $r$ centered at $x \in \R^n$. For any subset $P$ of $\R^n$ and $\alpha > 0$, we denote by $\U_\alpha(P)$ the set of open balls $\setof{\cB_\alpha(p)\mid p \in P}$ and let $\cU_\alpha(P)$ be their union. We say that $P$ is {\em $\alpha$-dense} in the manifold $\cX$ if we have the inclusion $\cX \subset \cU_\alpha(P)$. The following proposition enables one to recover the homotopy type of $\cX$ from a finite set $X$ which is sufficiently dense in $\cX$ relative to $\tau_\cX$.

\begin{prop}
\label{prop:nswdens} 
{\em (\cite[Prop 3.1]{NSW06})}
Assume $\epsilon \in \left(0,\sqrt{\nicefrac{3}{5}}~\tau_\cX\right)$ and that a finite set $X \subset \R^n$ is $\nicefrac{\epsilon}{2}$-dense in $\cX$. Then, the canonical projection map $\pi_\cX: \cU_\epsilon(X) \to \cX$ defined by
\begin{align}
\label{eqn:pidef}
\pi_\cX(w) = \arg\min_{x \in \cX}\|w-x\|_{\R^n}
\end{align}
is a strong deformation-retraction.
\end{prop}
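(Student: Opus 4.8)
The plan is to exhibit the obvious candidate — the straight‑line retraction toward $\cX$ — and to show that it never leaves $\cU_\epsilon(X)$, with the reach hypothesis doing all of the geometric work.

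First I would dispose of the soft points. Because $X\subseteq\cX$, every $w\in\cU_\epsilon(X)$ satisfies $d(w,\cX)<\epsilon<\sqrt{3/5}\,\tau_\cX<\tau_\cX$; since the normal bundle of radius $\epsilon$ about $\cX$ embeds, $w$ has a unique nearest point on $\cX$, so $\pi_\cX$ is well defined on $\cU_\epsilon(X)$, and a routine compactness argument (using that $\cX$ is compact and the nearest point unique) shows it is continuous there. Because $X$ is $\nicefrac{\epsilon}{2}$‑dense we have $\cX\subseteq\cU_{\epsilon/2}(X)\subseteq\cU_\epsilon(X)$, and $\pi_\cX$ restricts to the identity on $\cX$. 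Now set
\[
H\colon \cU_\epsilon(X)\times[0,1]\longrightarrow\R^n,\qquad H(w,t)=(1-t)\,w+t\,\pi_\cX(w).
\]
Then $H$ is continuous, $H(\cdot,0)=\mathrm{id}$, $H(\cdot,1)=\pi_\cX$, and $H(x,t)=x$ for every $x\in\cX$ and every $t$; so as soon as we know that $H$ takes values in $\cU_\epsilon(X)$, it is the required strong deformation retraction. Equivalently, for each $w\in\cU_\epsilon(X)$ it suffices to show that the segment $[w,\pi_\cX(w)]$ lies inside $\cU_\epsilon(X)$.

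Fix such a $w$, put $p=\pi_\cX(w)$ and $\delta=\|w-p\|=d(w,\cX)<\epsilon$, and recall that $w-p$ is orthogonal to $T_p\cX$. Every point $y$ of the segment has $d(y,\cX)\le\|y-p\|\le\delta$, and I would split on $\delta$. If $\delta<\nicefrac{\epsilon}{2}$, then $d(y,\cX)<\nicefrac{\epsilon}{2}$ for all $y$ on the segment, so $\pi_\cX(y)$ lies within $\nicefrac{\epsilon}{2}$ of some sample point $z\in X$ (by $\nicefrac{\epsilon}{2}$‑density applied to $\pi_\cX(y)\in\cX$), whence $\|y-z\|<\epsilon$ and $y\in\cU_\epsilon(X)$. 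Suppose instead $\nicefrac{\epsilon}{2}\le\delta<\epsilon$. The points $y$ with $\|y-p\|<\nicefrac{\epsilon}{2}$ are handled exactly as above. For the remaining $y=(1-t)w+t\,p$ I would fix a center $p_i\in X\subseteq\cX$ with $\|w-p_i\|<\epsilon$ and bound $\|y-p_i\|$ directly: expand $\|y-p_i\|^2$ as a quadratic in $t$, estimate the inner product $\langle w-p_i,\,w-p\rangle$ using $w-p\perp T_p\cX$ together with the standard reach bound (for $q\in\cX$ the normal component of $q-p$ at $p$ has length at most $\|q-p\|^2/(2\tau_\cX)$), which also yields $\|w-p_i\|^2\ge\|p-p_i\|^2\bigl(1-\delta/\tau_\cX\bigr)+\delta^2$, and then substitute $\delta\ge\nicefrac{\epsilon}{2}$, $\|w-p_i\|<\epsilon$ and $\epsilon<\sqrt{3/5}\,\tau_\cX$ to conclude $\|y-p_i\|<\epsilon$, so again $y\in\cU_\epsilon(X)$. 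This covers the whole segment and finishes the argument.

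The crux is this last estimate. The naive bound $\|\pi_\cX(w)-p_i\|\le\|\pi_\cX(w)-w\|+\|w-p_i\|<2\epsilon$ is useless — when $w$ sits near the boundary of its ball the segment genuinely leaves that single ball — so one is forced to use the curvature control encoded in $\tau_\cX$, and the numerical hypothesis $\epsilon<\sqrt{3/5}\,\tau_\cX$ is precisely what makes the worst case of the split (the configurations where $w$ is far from $\cX$ but still in a sample ball) go through; sloppier bookkeeping of the same inequalities costs a constant and yields only, say, $\epsilon<\nicefrac{2}{3}\tau_\cX$. Everything else — well‑definedness and continuity of $\pi_\cX$, and the fact that $H$ fixes $\cX$ pointwise — is standard.
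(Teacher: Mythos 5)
This proposition is stated in the paper as a direct quotation of \cite[Prop 3.1]{NSW06} with no proof supplied, and your argument is essentially the original one: the straight-line homotopy $H(w,t)=(1-t)w+t\,\pi_\cX(w)$ along the normal fibers, a case split at distance $\nicefrac{\epsilon}{2}$, the density hypothesis for the easy case, and the reach estimate $|\langle q-p,u\rangle|\le \|q-p\|^2/(2\tau_\cX)$ driving the quadratic bound in the hard case. The sketched computation does close as claimed — using your lower bound $\|w-p_i\|^2\ge \delta^2+\|p-p_i\|^2(1-\delta/\tau_\cX)$ and restricting the quadratic estimate to $t\le 1-\epsilon/(2\delta)$, the needed inequality reduces to $\epsilon\le \frac{4}{5}\tau_\cX$, which is implied by (indeed slightly weaker than) the hypothesis $\epsilon<\sqrt{3/5}\,\tau_\cX$, so the only inaccuracy is your parenthetical claim that $\sqrt{3/5}$ is \emph{precisely} the threshold forced by this bookkeeping.
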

    
As a corollary to this proposition one obtains a string of isomorphisms on homology: 
\[
\HG_*(\cX) \simeq \HG_*\left(\cU_\epsilon(X)\right) \simeq \HG_*(|N_\epsilon(X)|) \simeq \HG_*^\Delta(N_\epsilon(X)).
\]
The first isomorphism comes from the fact that deformation retractions preserve homotopy type and homology is a homotopy-invariant. The second isomorphism results from applying the nerve lemma: since $\U_\epsilon$ is a convex cover of $\cU_\epsilon(X)$ for each $\epsilon$, the associated nerve is contractible. The last isomorphism is simply the equivalence of singular and simplicial homology. We remark here that the nerve of a cover by balls \cite{cechcomp} and the homology groups of finite simplicial complexes \cite{gaphom, HMMN} are eminently computable. Thus, one can actually obtain a finite representation of the homology of $\cX$ up to isomorphism from a sufficiently dense point sample. 

Adopting the terminology of the proposition, we observe an important property of the projection map $\pi_\cX:\cU_\epsilon(X) \to \cX$ which holds even when $\epsilon$ is allowed to range over the larger interval $(0,\tau_\cX)$. Note that for each $w \in \cU_\epsilon(X)$ we have some $\xi \in X$ with $\|\xi-w\|_{\R^n} < \epsilon$. On the other hand, the distance between $w$ and $\pi_\cX(w)$ is at most $\epsilon$ as well, since $\pi_\cX(w)$ is the nearest point of the manifold $\cX$ to $w$. By the triangle inequality, one has the following estimate for each $\xi \in X$ and $w \in \cB_\epsilon(\xi)$ whenever $\epsilon_\cX \in (0,\tau_\cX)$:
\begin{align}
\label{eqn:pidist}
\|\pi_\cX(w) - \xi\|_{\R^n} < 2\epsilon_\cX.
\end{align}
This is not the best possible estimate one could obtain, but it suffices for our purposes here.

The following proposition  assumes that $X$ is obtained by uniform i.i.d.\ sampling on $\cX$ and provides a lower bound on the sample size $\#X$ which guarantees -- with high confidence -- the $\nicefrac{\epsilon}{2}$-density needed by the previous proposition. 

\begin{prop}
\label{prop:nswprob}
{\em (\cite[Prop 3.2]{NSW06})}
Choose $\epsilon \in \left(0,\nicefrac{\tau_\cX}{2}\right)$ and the probability parameter $\delta \in (0,1]$. Assume that $X$ is obtained by i.i.d.\ uniform samplings from $\cX$. If $\#X > \beta_\cX(\epsilon,\delta)$, then $X$ is $\nicefrac{\epsilon}{2}$-dense in $\cX$ with probability exceeding $(1-\delta)$.
\end{prop}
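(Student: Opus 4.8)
The plan is to combine a covering (net) argument on $\cX$ with an elementary coupon-collector estimate, along the lines of \cite{NSW06}. First I would fix a maximal subset $A = \setof{a_1, \dots, a_\ell} \subset \cX$ whose points are pairwise separated by Euclidean distance at least $\nicefrac{\epsilon}{4}$; such a set is finite since $\cX$ is compact. Maximality forces $A$ to be $\nicefrac{\epsilon}{4}$-dense in $\cX$: every $x \in \cX$ lies strictly within distance $\nicefrac{\epsilon}{4}$ of some $a_i$, for otherwise $A \cup \setof{x}$ would still be $\nicefrac{\epsilon}{4}$-separated, contradicting maximality. On the other hand, the separation of $A$ makes the open balls $\cB_{\epsilon/8}(a_i)$ pairwise disjoint, hence the sets $\cX \cap \cB_{\epsilon/8}(a_i)$ are disjoint subsets of $\cX$.

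Second, I would invoke the standard lower bound on the volume of a small Euclidean ball intersected with a submanifold of condition number $\nicefrac{1}{\tau_\cX}$ --- this is precisely the geometric estimate underlying Definition~\ref{defn:boundingfunction}; see \cite[Lemma 5.3]{NSW06}. It asserts that for $p \in \cX$ and $r$ small relative to $\tau_\cX$,
\[
\w{vol}\left(\cX \cap \cB_r(p)\right) \geq \cos^k\left(\arcsin\left(\nicefrac{r}{2\tau_\cX}\right)\right)\cdot \w{vol}\left(\cB^k_r\right).
\]
Applying this with $r = \nicefrac{\epsilon}{8}$ and summing the volumes of the disjoint pieces $\cX \cap \cB_{\epsilon/8}(a_i)$, which together do not exceed $\w{vol}(\cX)$, yields $\ell \leq \beta_2$. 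Applying it instead with $r = \nicefrac{\epsilon}{4}$ and dividing by $\w{vol}(\cX)$ shows that a single uniform sample from $\cX$ lands in $\cB_{\epsilon/4}(a_i)$ with probability at least $\nicefrac{1}{\beta_1}$, for each fixed $i$. (Here one uses $\epsilon < \nicefrac{\tau_\cX}{2}$ to guarantee the radii are small enough for the volume estimate to apply.)

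Third comes the probabilistic step. For a fixed $a_i$, the probability that none of the $\#X$ i.i.d.\ samples falls in $\cB_{\epsilon/4}(a_i)$ is at most $\left(1 - \nicefrac{1}{\beta_1}\right)^{\#X} \leq \exp\left(-\nicefrac{\#X}{\beta_1}\right)$. A union bound over the at most $\beta_2$ points of $A$ shows that the probability that some ball $\cB_{\epsilon/4}(a_i)$ is missed by all samples is at most $\beta_2\exp\left(-\nicefrac{\#X}{\beta_1}\right)$, and a direct computation shows this is at most $\delta$ as soon as $\#X \geq \beta_1\left[\log\beta_2 + \log(\nicefrac{1}{\delta})\right] = \beta_\cX(\epsilon,\delta)$; in particular the strict inequality $\#X > \beta_\cX(\epsilon,\delta)$ makes the failure probability strictly less than $\delta$. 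On the complementary event --- every $\cB_{\epsilon/4}(a_i)$ contains at least one sampled point --- the triangle inequality finishes the argument: given $x \in \cX$, choose $a_i$ with $\|x - a_i\|_{\R^n} < \nicefrac{\epsilon}{4}$ and a sample $\xi \in X \cap \cB_{\epsilon/4}(a_i)$, so that $\|x - \xi\|_{\R^n} < \nicefrac{\epsilon}{2}$; hence $\cX \subset \cU_{\epsilon/2}(X)$, which is exactly $\nicefrac{\epsilon}{2}$-density.

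I expect the only genuine obstacle to be the geometric volume estimate, which is where the condition number does its work: bounding $\nicefrac{1}{\tau_\cX}$ controls the sectional curvature and injectivity radius of $\cX$, and one must show that a Euclidean ball of radius $r$ meets $\cX$ in a set whose $k$-dimensional volume is comparable to that of a flat $k$-ball of the same radius. Everything else --- the net construction, the packing inequality $\ell \leq \beta_2$, and the coupon-collector tail bound --- is routine once that estimate is available; the otherwise opaque constants $\beta_1$ and $\beta_2$ are simply what the volume bound produces at the two radii $\nicefrac{\epsilon}{4}$ and $\nicefrac{\epsilon}{8}$.
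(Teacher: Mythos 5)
Your argument is correct and is essentially the proof given in the cited source: the paper itself states this proposition without proof, quoting \cite[Prop 3.2]{NSW06}, and your combination of a maximal $\nicefrac{\epsilon}{4}$-separated net, the curvature-controlled volume lower bound at radii $\nicefrac{\epsilon}{4}$ and $\nicefrac{\epsilon}{8}$, and the union-bound/coupon-collector estimate is exactly how $\beta_1$ and $\beta_2$ arise there. No gaps.
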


Propositions \ref{prop:nswdens} and \ref{prop:nswprob} lead directly to Theorem \ref{thm:nswmain}, which is the main result of \cite{NSW06}.

\section{Recovering Functions from Samples}\label{sec:proofs}
	 In this section we provide a proof of our main result, Theorem~\ref{thm:finalres}. The hypotheses of this theorem consist of a variety of assumptions and a-priori choices of parameters. To clarify their respective roles we present them via the following exhaustive list. The notation below will remain fixed throughout this section.

\begin{itemize}
\item[\bf Cnd:] Assume that $\cX \subset \R^n$ and $\cY \subset \R^m$ are compact Riemannian submanifolds with condition numbers $\nicefrac{1}{\tau_\cX}$ and $\nicefrac{1}{\tau_\cY}$, respectively.
\item[\bf Prb:] Choose probability parameters $\delta_\cX, \delta_\cY \in (0,1]$. 
\item[\bf Lip:] Assume $f:\cX\to \cY$ is a Lipschitz continuous function with Lipschitz constant less than $\kappa \geq 0$. More precisely, we require $\|f(x)-f(x')\|_{\R^m} \leq \kappa \|x - x'\|_{\R^n}$ for any pair of points $x,x' \in \cX$.
\item[\bf Rad:] Choose the radii $\epsilon_\cX \in (0, \nicefrac{\tau_\cX}{2})$ and $\epsilon_\cY \in (0, \nicefrac{\tau_\cY}{2})$ so that
$4\kappa\epsilon_\cX < \epsilon_\cY$.
\item [\bf Smp:]  Assume knowledge of the finite sets $X \subset \cX$ and $Y \subset \cY$ obtained by i.i.d.\ uniform sampling from $\cX$ and $\cY$ respectively. Furthermore, require $\#X > \beta_\cX(\epsilon_\cX, \delta_\cX)$ and $\#Y > \beta_\cY(\epsilon_\cY, \delta_\cY)$. 
\item [\bf Img:] Assume knowledge of the restriction $f|_X:X \to \cY \subset \R^m$ of $f$ to the point sample $X$.
\end{itemize}

It is important to note that neither {\bf Smp} nor {\bf Img} imply that sampled points  map to sampled points, so in general $f(X) \not\subset Y$. Since {\bf Rad} fixes choices of $\epsilon_\cX$ and $\epsilon_\cY$, we simplify our notation by declaring that $N(X) := N_{\epsilon_\cX}(X)$ and $N(Y) := N_{\epsilon_\cY}(Y)$. Similarly, we denote the unions of balls $\cU_{\epsilon_\cX}(X)$ and $\cU_{\epsilon_\cY}(Y)$ by $\cU(X)$ and $\cU(Y)$. Finally, define $\rho \in \R$ by
\begin{equation}
\label{eq:rho}
\rho =  \epsilon_\cY - 2\kappa\epsilon_\cX.
\end{equation}
Note that by {\bf Rad}, we have $\rho > \nicefrac{\epsilon_\cY}{2} > 0$.

By {\bf Cnd}, {\bf Prb}, {\bf Rad} and {\bf Smp}, Theorem \ref{thm:nswmain} establishes that the vertical maps in the diagram below induce isomorphisms on homotopy with probability exceeding $(1-\delta_\cX)(1-\delta_\cY)$:
\[
\begin{diagram}
\dgARROWLENGTH .35in
\node{\cX} \arrow{e,t}{f}
\node{\cY} \arrow{s,r}{\iota_\cY}
\\
\node{\cU(X)} \arrow{n,l}{\pi_\cX}\arrow{e,b,..}{g}
\node{\cU(Y)}
\end{diagram}
\]
Here, $\pi_\cX$ is the canonical projection map (\ref{eqn:pidef}) and $\iota_\cY$ is the inclusion of $\cY$ into the union of balls $\cU(Y)$. Let $g:\cU(X) \to \cU(Y)$ be the composition $\iota_\cY \circ f \circ \pi_\cX$. With probability exceeding $(1-\delta_\cX)(1-\delta_\cY)$, the map induced by $g$ on homotopy is naturally related by isomorphisms to the corresponding map induced by $f$.

\begin{defn}
{\em
A {\em simplicial reconstruction} of $f$ is defined to be any simplicial map $\phi:N(X) \to N(Y)$ so that $g(\capp{\sigma}) \subset \capp{\phi(\sigma)}$ for each $\sigma \in N(X)$.
}
\end{defn}

We know from Lemma \ref{lem:contner} that it is possible to find continuous maps $\zeta_Z:|N(Z)| \to \cU(Z)$ for $Z \in \setof{X,Y}$ which induce isomorphisms on homotopy and satisfy $\zeta_Z(|\sigma|) \subset \capp{\sigma}$ for each simplex $\sigma \in N(Z)$. For any choice of such maps, the next proposition establishes that the following diagram commutes up to homotopy whenever $\phi$ is a simplicial reconstruction of $f$:
\[
\begin{diagram}
\dgARROWLENGTH .35in
\node{\cU(X)} \arrow{e,b}{g}
\node{\cU(Y)}
\\
\node{|N(X)|} \arrow{e,t,..}{|\phi|}\arrow{n,l}{\zeta_X} 
\node{|N(Y)|} \arrow{n,r}{\zeta_Y}
\end{diagram}
\]

\begin{prop}
\label{prop:rechom}
If $\phi$ is a simplicial reconstruction of $f$, then $\zeta_Y \circ |\phi|$ and $g \circ \zeta_X$ share a contractible carrier and hence are homotopic.
\end{prop}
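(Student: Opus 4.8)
The plan is to produce a single contractible carrier $C : N(X) \mvmap \cU(Y)$ which carries both of the maps $\zeta_Y \circ |\phi|$ and $g \circ \zeta_X$; the conclusion then follows at once from the Carrier Lemma (Lemma~\ref{lem:car}). The natural candidate is the assignment
\[
C(\sigma) = \capp{\phi(\sigma)} = \bigcup_{v \in \phi(\sigma)} \cB_{\epsilon_\cY}(v) \subset \cU(Y),
\]
defined for each simplex $\sigma \in N(X)$.

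First I would check that $C$ is a contractible carrier. Monotonicity is immediate: if $\sigma \subset \tau$ in $N(X)$, then $\phi(\sigma) \subset \phi(\tau)$ because $\phi$ is simplicial, whence $\capp{\phi(\sigma)} \subset \capp{\phi(\tau)}$. For contractibility, note that $\phi(\sigma)$ is a simplex of $N(Y)$, so its support $\supp{\phi(\sigma)} = \bigcap_{v \in \phi(\sigma)} \cB_{\epsilon_\cY}(v)$ is non-empty; fix a point $p$ in it. Every ball $\cB_{\epsilon_\cY}(v)$ appearing in the union defining $C(\sigma)$ is convex and contains $p$, so for any $w \in C(\sigma)$ the straight-line segment from $p$ to $w$ lies entirely in whichever ball contains $w$, hence in $C(\sigma)$. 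Thus $C(\sigma)$ is star-shaped about $p$ and therefore contractible.

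Next I would verify that both maps in question are carried by $C$. For $\zeta_Y \circ |\phi|$: fix $\sigma \in N(X)$; using $|\phi|(|\sigma|) \subset |\phi(\sigma)|$ (the general property of realizations of simplicial maps recorded above) together with the defining property $\zeta_Y(|\tau|) \subset \capp{\tau}$ of $\zeta_Y$ applied to $\tau = \phi(\sigma)$, one gets $(\zeta_Y \circ |\phi|)(|\sigma|) \subset \zeta_Y(|\phi(\sigma)|) \subset \capp{\phi(\sigma)} = C(\sigma)$. For $g \circ \zeta_X$: the defining property $\zeta_X(|\sigma|) \subset \capp{\sigma}$ gives $(g \circ \zeta_X)(|\sigma|) \subset g(\capp{\sigma})$, and since $\phi$ is a simplicial reconstruction of $f$ we have $g(\capp{\sigma}) \subset \capp{\phi(\sigma)} = C(\sigma)$ by definition. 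Hence both continuous maps are carried by the contractible carrier $C$, and Lemma~\ref{lem:car} yields $\zeta_Y \circ |\phi| \sim g \circ \zeta_X$.

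I do not anticipate a serious obstacle: the argument is a formal application of the Carrier Lemma, and the only point requiring a small observation is the contractibility of $\capp{\phi(\sigma)}$, which follows from the fact that a union of convex sets sharing a common point is star-shaped about that point. Note in particular that the metric hypotheses of Theorem~\ref{thm:finalres} — the Lipschitz bound $\kappa$ and the radius condition $4\kappa\epsilon_\cX < \epsilon_\cY$, and hence the auxiliary constant $\rho$ — play no role here; they enter only when one must exhibit an actual simplicial reconstruction $\phi$ in the first place.
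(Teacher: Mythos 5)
Your proposal is correct and follows essentially the same route as the paper: both define the carrier $C(\sigma)=\capp{\phi(\sigma)}$, verify monotonicity and contractibility (the paper simply cites that a union of convex sets with non-empty intersection is contractible, which you justify via star-shapedness), and then check that each map is carried by $C$ before invoking the Carrier Lemma. No gaps.
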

\begin{proof}
For each $\sigma \in N(X)$, define $F(\sigma) = \capp{\phi(\sigma)}$. Note that $\sigma \subset \sigma'$ implies $F(\sigma) \subset F(\sigma')$ since the latter is the union over a superset. The $F$-image of each $\sigma \in N(X)$ is contractible in $\cU(Y)$, being a union of convex sets (in our case, balls in Euclidean space) with a non-empty intersection. Thus, $F:N(X) \mvmap \cU(Y)$ is a contractible carrier. Given any $\sigma \in N(X)$, we have
\begin{align*}
\zeta_Y \circ |\phi|(|\sigma|) & \subset \zeta_Y(|\phi(\sigma)|) \text{, since $\phi$ is a simplicial map, }\\
															 & \subset \capp{\phi(\sigma)} \text{, by assumption on $\zeta_Y$,}\\
															 & = F(\sigma).
\end{align*}
On the other hand,
\begin{align*}
g \circ \zeta_X(|\sigma|) & \subset g\left(\capp{\sigma}\right)\text{, by assumption on $\zeta_X$, }\\
													& \subset \capp{\phi(\sigma)} \text{, since $\phi$ is a simplicial reconstruction of $f$,} \\
													& = F(\sigma).
\end{align*}
Thus, both $\zeta_Y \circ |\phi|$ and $g \circ \zeta_X$ are carried by $F$.
\end{proof}

With the goal of building a simplicial reconstruction in mind, we consider the correspondence $\Delta: X \to 2^Y$ defined on each $\xi \in X$ by
\begin{align}
\label{eqn:Ldef}
\Delta(\xi) = \setof{\eta \in Y \text{ so that } \|\eta - f(\xi)\|_{\R^m} < \rho}.
\end{align}

\begin{prop}
\label{prop:nonempty}
With probability exceeding $(1 - \delta_\cY)$, the following holds. For each $\xi \in X$, the set $\Delta(\xi) \subset Y$ is non-empty.
\end{prop}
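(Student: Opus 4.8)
The plan is to use the $\nicefrac{\epsilon_\cY}{2}$-density of the sample $Y$ in $\cY$, which holds with probability exceeding $(1-\delta_\cY)$ by Proposition~\ref{prop:nswprob} (this requires $\epsilon_\cY < \nicefrac{\tau_\cY}{2}$ and $\#Y > \beta_\cY(\epsilon_\cY, \delta_\cY)$, both of which are hypotheses of the theorem). Conditioned on this density event, I claim $\Delta(\xi)$ is non-empty for every $\xi \in X$ simultaneously, which yields the stated probability bound without any union bound over the points of $X$.

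First I would fix an arbitrary $\xi \in X$ and consider the point $f(\xi) \in \cY$; this is a genuine point of $\cY$ since $f$ maps $\cX$ into $\cY$ and $\xi \in \cX$ by {\bf Smp}. The $\nicefrac{\epsilon_\cY}{2}$-density of $Y$ in $\cY$ means $\cY \subset \cU_{\epsilon_\cY/2}(Y)$, so there exists some $\eta \in Y$ with $\|\eta - f(\xi)\|_{\R^m} < \nicefrac{\epsilon_\cY}{2}$. It then suffices to observe that $\nicefrac{\epsilon_\cY}{2} \leq \rho$: indeed, recall from the remark following equation~(\ref{eq:rho}) that $\rho = \epsilon_\cY - 2\kappa\epsilon_\cX > \nicefrac{\epsilon_\cY}{2}$, the strict inequality being a direct consequence of the assumption $4\kappa\epsilon_\cX < \epsilon_\cY$ in {\bf Rad}. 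Hence $\|\eta - f(\xi)\|_{\R^m} < \nicefrac{\epsilon_\cY}{2} < \rho$, so $\eta \in \Delta(\xi)$ and $\Delta(\xi) \neq \emptyset$.

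Since $\xi$ was arbitrary, the density event implies that $\Delta(\xi)$ is non-empty for all $\xi \in X$ at once, and this event has probability exceeding $(1-\delta_\cY)$ by Proposition~\ref{prop:nswprob}. There is no real obstacle here: the argument is a one-line triangle-type estimate once the density of $Y$ is invoked, and the only subtlety worth flagging is that the probabilistic content is entirely carried by Proposition~\ref{prop:nswprob}; the Lipschitz constant $\kappa$ plays no role in this particular proposition beyond having already been used in {\bf Rad} to guarantee $\rho > \nicefrac{\epsilon_\cY}{2}$. (The interplay between $\kappa$, $f|_X$ and the nearby balls of $\cU(Y)$ will enter only in the subsequent construction of $\phi$ from the correspondence $\Delta$.)
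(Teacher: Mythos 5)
Your proof is correct and follows exactly the paper's argument: invoke Proposition~\ref{prop:nswprob} to get $\nicefrac{\epsilon_\cY}{2}$-density of $Y$ in $\cY$ with probability exceeding $(1-\delta_\cY)$, then for each $\xi \in X$ find $\eta \in Y$ within $\nicefrac{\epsilon_\cY}{2} < \rho$ of $f(\xi)$, so that $\eta \in \Delta(\xi)$. Your observation that the single density event handles all $\xi$ simultaneously (no union bound needed) is also implicit in the paper's proof.
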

\begin{proof}
Since {\bf Cnd}, {\bf Prb}, {\bf Rad} and {\bf Smp} satisfy the hypotheses of Proposition \ref{prop:nswprob} for $\cY$, we see that $Y$ is $\nicefrac{\epsilon_\cY}{2}$-dense in $\cY$ with probability exceeding $(1-\delta_\cY)$. This density suffices to guarantee a non-empty $\Delta(\xi)$ for each $\xi \in X$ in the following way. For any $\xi \in X$ there exists some $\eta \in Y$ with $\|f(\xi) - \eta)\|_{\R^m} < \nicefrac{\epsilon_\cY}{2}$. Since $\nicefrac{\epsilon_\cY}{2} < \rho$ as a consequence of {\bf Rad} and (\ref{eq:rho}), we have $\eta \in \Delta(\xi)$. 
\end{proof}

Let $\bv_X:N(X) \to 2^X$ denote the map taking each simplex $\sigma \in N(X)$ to its vertex set $\bv_X(\sigma) \subset X$ which corresponds to (the centers of) those balls whose non-empty intersection determines the support of $\sigma$. Note that $\sigma \subset \sigma'$ in $N(X)$ if and only if we have the inclusion $\bv_X(\sigma) \subset \bv_X(\sigma')$. Define $\bv_Y:N(Y) \to 2^Y$ similarly. For each $\sigma \in N(X)$, define $\Delta_\sigma \subset Y$ by
\begin{align}
\label{eqn:delta}
\Delta_\sigma = \bigcup_{\xi \in \bv_X(\sigma)}\hspace{-.5em}\Delta(\xi).
\end{align}

\begin{prop}
\label{prop:gammasimp}
With probability exceeding $(1-\delta_\cX)$, the following holds. For each $\sigma \in N(X)$, any non-empty subset of $\Delta_\sigma$ determines a simplex in $N(Y)$.
\end{prop}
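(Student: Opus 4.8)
The plan is to show that the balls of radius $\epsilon_\cY$ centered at the points of any non-empty $S \subset \Delta_\sigma$ have a common point, which by definition of the nerve means $S \in N(Y)$. Fix a simplex $\sigma \in N(X)$. By the definition of $N(X)$, the support $\supp{\sigma} = \bigcap_{\xi \in \bv_X(\sigma)}\cB_{\epsilon_\cX}(\xi)$ is non-empty in $\cU(X)$, so pick a point $w$ in it. The idea is to use $f(\pi_\cX(w)) = g(w) \in \cY$ as the witness point: I will show that every $\eta \in \Delta_\sigma$ lies within distance $\epsilon_\cY$ of $f(\pi_\cX(w))$, so $f(\pi_\cX(w)) \in \bigcap_{\eta \in S}\cB_{\epsilon_\cY}(\eta)$ for any non-empty $S \subseteq \Delta_\sigma$.

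First I would estimate $\|\pi_\cX(w) - \xi\|_{\R^n}$ for each vertex $\xi \in \bv_X(\sigma)$. Since $w \in \cB_{\epsilon_\cX}(\xi)$ and $\epsilon_\cX \in (0,\tau_\cX)$, estimate (\ref{eqn:pidist}) gives $\|\pi_\cX(w) - \xi\|_{\R^n} < 2\epsilon_\cX$. Applying the Lipschitz hypothesis {\bf Lip} to the pair $\pi_\cX(w), \xi \in \cX$ yields $\|f(\pi_\cX(w)) - f(\xi)\|_{\R^m} \leq \kappa\|\pi_\cX(w) - \xi\|_{\R^n} < 2\kappa\epsilon_\cX$. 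Now take any $\eta \in \Delta_\sigma$; by (\ref{eqn:delta}) there is some $\xi \in \bv_X(\sigma)$ with $\eta \in \Delta(\xi)$, so by (\ref{eqn:Ldef}) we have $\|\eta - f(\xi)\|_{\R^m} < \rho$. The triangle inequality then gives
\[
\|\eta - f(\pi_\cX(w))\|_{\R^m} \leq \|\eta - f(\xi)\|_{\R^m} + \|f(\xi) - f(\pi_\cX(w))\|_{\R^m} < \rho + 2\kappa\epsilon_\cX = \epsilon_\cY,
\]
using the definition (\ref{eq:rho}) of $\rho$ in the last equality. Hence $f(\pi_\cX(w)) \in \cB_{\epsilon_\cY}(\eta)$ for every $\eta \in \Delta_\sigma$, and a fortiori for every $\eta$ in any non-empty subset $S$ of $\Delta_\sigma$.

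It remains to check that $f(\pi_\cX(w))$ actually lies in $\cU(Y)$, so that it is a legitimate point of the intersection in the nerve of the cover of $\cU(Y)$; this is where the probabilistic hypothesis enters. Since {\bf Cnd}, {\bf Prb}, {\bf Rad} and {\bf Smp} satisfy the hypotheses of Proposition \ref{prop:nswprob} for $\cX$, the sample $X$ is $\nicefrac{\epsilon_\cX}{2}$-dense in $\cX$ with probability exceeding $(1-\delta_\cX)$; on this event $\cX \subset \cU(X)$ and the projection $\pi_\cX$ is well-behaved, and more importantly $\Delta_\sigma$ is non-empty for every $\sigma$ with the same probability (the density gives, for each $\xi$, some $\eta \in Y$ within $\nicefrac{\epsilon_\cY}{2} < \rho$ of $f(\xi)$ exactly as in Proposition \ref{prop:nonempty} — though note that event there was phrased in terms of $\delta_\cY$; here I only need that on the $\delta_\cX$-event any chosen $\eta \in S \subseteq \Delta_\sigma$ is a point of $Y$, so $f(\pi_\cX(w))$ being within $\epsilon_\cY$ of it places it in the ball $\cB_{\epsilon_\cY}(\eta) \subset \cU(Y)$). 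Thus on the probability-$(1-\delta_\cX)$ event, $f(\pi_\cX(w))$ is a common point of $\{\cB_{\epsilon_\cY}(\eta)\}_{\eta \in S}$ inside $\cU(Y)$, so $S$ is a simplex of $N(Y)$.

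I expect the only subtlety — not a genuine obstacle — is bookkeeping about which probabilistic event is being invoked and with which parameter: the statement claims probability $(1-\delta_\cX)$, which matches the $\nicefrac{\epsilon_\cX}{2}$-density of $X$ via Proposition \ref{prop:nswprob} applied to $\cX$. The geometric core is the triangle-inequality chain above, which is entirely routine once estimate (\ref{eqn:pidist}), the Lipschitz bound, and the definition of $\rho$ are combined; the choice $4\kappa\epsilon_\cX < \epsilon_\cY$ in {\bf Rad} is precisely what makes $\rho + 2\kappa\epsilon_\cX = \epsilon_\cY$ work out with room to spare.
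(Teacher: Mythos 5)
Your proof is correct and follows essentially the same route as the paper's: pick $w \in \supp{\sigma}$, use $f(\pi_\cX(w))$ as the common witness point via estimate (\ref{eqn:pidist}), the Lipschitz bound, the definition of $\Delta(\xi)$, and the triangle inequality, with the probabilistic event being the $\nicefrac{\epsilon_\cX}{2}$-density of $X$ from Proposition \ref{prop:nswprob}. Your observation that $\rho + 2\kappa\epsilon_\cX = \epsilon_\cY$ exactly is in fact slightly cleaner than the paper's statement of that step, and your extra care about $f(\pi_\cX(w))$ lying in $\cU(Y)$ is harmless (it is automatic, since each $\cB_{\epsilon_\cY}(\eta)$ with $\eta \in Y$ is contained in $\cU(Y)$).
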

\begin{proof}
By {\bf Cnd}, {\bf Prb}, {\bf Rad} and {\bf Smp}, Proposition \ref{prop:nswprob} holds for $\cX$. Therefore, with probability exceeding $(1-\delta_\cX)$ we are guaranteed that $X$ is sufficiently dense in $\cX$ so that the canonical projection map $\pi_\cX$ from (\ref{eqn:pidef}) induces homotopy-equivalence. Assuming this density, pick  $w \in \supp{\sigma}$ and recall that $g(w) = \iota_\cY\circ f\circ \pi_\cX(w)$ by definition. Setting $x = \pi_\cX(w)$, we note from (\ref{eqn:pidist}) that $\|x - \xi\|_{\R^n} < 2\epsilon_\cX$ for each $\xi \in \bv_X(\sigma)$. By {\bf Lip}, for each such $\xi$ we have:
\begin{align}
\label{eqn:fxcont}
\|f(x) - f(\xi)\|_{\R^m} < 2\kappa\epsilon_\cX.
\end{align}
Given $\eta \in \Delta_\sigma$, by (\ref{eqn:delta}) there is some $\xi_* \in \bv_X(\sigma)$ so that $\eta \in \Delta(\xi_*)$, whence $\|\eta - f(\xi_*)\|_{\R^m} < \rho$ by \ref{eqn:Ldef}. Since (\ref{eqn:fxcont}) implies $\|f(x) - f(\xi_*)\|_{\R^n} < 2\kappa\epsilon_\cX$, the triangle inequality yields 
\[
\|f(x) - \eta\|_{\R^m} < \rho + 2\kappa\epsilon_\cX < \epsilon_\cY.
\]
Since the point $f(x)$ lies in the intersection $\bigcap_{\eta \in \Delta_\sigma}\cB_{\epsilon_\cY}(\eta)$, this intersection is non-empty and must determine a simplex of $N(Y)$. Clearly, any subset of $\Delta_\sigma$ determines a face of this simplex, and hence constitutes a simplex in its own right.
\end{proof}

Proposition \ref{prop:nonempty} guarantees with probability exceeding $(1-\delta_\cX)$ that $\Delta(\xi)$ is nonempty for each $\xi \in X$ and so we may choose a {\em selector} function $h: X \to Y$ so that $h(\xi) \in \Delta(\xi)$ for each such $\xi$. By definition of $\Delta$ and (\ref{eq:rho}), we have 
\begin{align}
\label{eqn:approx}
g(\cB_{\epsilon_\cX}(\xi)) \subset \cB_{\epsilon_\cY}(h(\xi)) \text{ for each } \xi \in X.
\end{align} 

Proposition \ref{prop:gammasimp} guarantees with probability exceeding $(1-\delta_\cY)$ that for each $\sigma \in N(X)$, the collection $\setof{h(\xi) \mid \xi \in \bv_X(\sigma)}$ determines a simplex of $N(Y)$. Therefore, with probability exceeding $(1-\delta_\cX)(1-\delta_\cY)$, there exists a map $h:X \to Y$ of points which induces a simplicial map $\phi_h:N(X) \to N(Y)$. The following proposition demonstrates that the homotopy type of the induced simplicial map is independent of the choice of $h$.                  

\begin{prop}
\label{prop:hindep}
Given any pair $h,h':X \to Y$ of selectors, the maps $|\phi_h|$ and $|\phi_{h'}|$ from $|N(X)|$ to $|N(Y)|$ are homotopic.
\end{prop}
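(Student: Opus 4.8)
The plan is to produce a single contractible carrier $C : N(X) \mvmap |N(Y)|$ which carries both $|\phi_h|$ and $|\phi_{h'}|$ and then apply the Carrier Lemma (Lemma~\ref{lem:car}) to conclude that the two maps are homotopic. The natural candidate is built from the sets $\Delta_\sigma$ of (\ref{eqn:delta}): for each $\sigma \in N(X)$, set $C(\sigma) = |\Delta_\sigma| \subset |N(Y)|$, the geometric realization of the simplex of $N(Y)$ spanned by $\Delta_\sigma$.

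First I would check that this assignment is well-defined and is in fact a contractible carrier. Every simplex $\sigma$ has at least one vertex $\xi \in \bv_X(\sigma)$, and $\Delta(\xi) \neq \emptyset$ by Proposition~\ref{prop:nonempty}, so $\Delta_\sigma \neq \emptyset$; Proposition~\ref{prop:gammasimp} then ensures that $\Delta_\sigma$ determines a simplex of $N(Y)$, so $|\Delta_\sigma|$ is a genuine (convex, hence contractible) subset of $|N(Y)|$. Monotonicity is immediate: if $\sigma \subset \sigma'$ in $N(X)$ then $\bv_X(\sigma) \subset \bv_X(\sigma')$, whence $\Delta_\sigma \subset \Delta_{\sigma'}$ by (\ref{eqn:delta}), and therefore $|\Delta_\sigma| \subset |\Delta_{\sigma'}|$.

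Next I would verify that $C$ carries both maps. For any selector $h$ and any vertex $\xi \in \bv_X(\sigma)$ we have $h(\xi) \in \Delta(\xi) \subset \Delta_\sigma$, so $\phi_h(\sigma) = \setof{h(\xi) \mid \xi \in \bv_X(\sigma)}$ is a face of the simplex $\Delta_\sigma$, giving $|\phi_h(\sigma)| \subset |\Delta_\sigma|$. Combining this with the general property $|\psi|(|\sigma|) \subset |\psi(\sigma)|$ recorded earlier for simplicial maps yields $|\phi_h|(|\sigma|) \subset |\phi_h(\sigma)| \subset |\Delta_\sigma| = C(\sigma)$, and the identical argument applies to $h'$. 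Hence both $|\phi_h|$ and $|\phi_{h'}|$ are carried by $C$, and Lemma~\ref{lem:car} gives $|\phi_h| \sim |\phi_{h'}|$ (indeed through a homotopy whose sections are also carried by $C$, should that be wanted downstream).

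I do not expect a serious obstacle: once the carrier $C(\sigma) = |\Delta_\sigma|$ is identified, the argument is a bookkeeping exercise resting on the containments already established in Propositions~\ref{prop:nonempty} and~\ref{prop:gammasimp} together with the formal behaviour of geometric realizations under simplicial maps and inclusions. The only point meriting a word of care is that the facts making $\phi_h$ and $\phi_{h'}$ legitimate simplicial maps, and making $|\Delta_\sigma|$ a simplex for every $\sigma$, are precisely the events already assumed to hold with probability exceeding $(1-\delta_\cX)(1-\delta_\cY)$; no new probabilistic estimate is required, so the statement is unconditional once one is on that event.
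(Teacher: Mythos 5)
Your proposal is correct and is essentially the paper's own argument: the carrier you call $C(\sigma)=|\Delta_\sigma|$ is exactly the paper's $|\tau_\sigma|$ with $\tau_\sigma=\bv_Y\inv(\Delta_\sigma)$, and both proofs verify monotonicity via (\ref{eqn:delta}), contractibility via Proposition~\ref{prop:gammasimp}, and that each $\phi_h(\sigma)$ is a face of $\tau_\sigma$ before invoking the Carrier Lemma. Your write-up is if anything slightly more explicit about the bookkeeping, but there is no substantive difference.
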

\begin{proof}
For each $\sigma \in N(X)$, we know that $\tau_\sigma := \bv_Y\inv(\Delta_\sigma)$ is a simplex of $N(Y)$ by Proposition \ref{prop:gammasimp}. Moreover, if $\sigma \subset \sigma'$ then $\Delta_\sigma \subset \Delta_{\sigma'}$ by \ref{eqn:delta} and therefore $\tau_\sigma \subset \tau_{\sigma'}$.. For any $\sigma \in N(X)$, note that since $\phi_h(\sigma)$ and $\phi_{h'}(\sigma)$ are faces of $\tau_\sigma$, both $|\phi_h(\sigma)|$ and $|\phi_{h'}(\sigma)|$ are subsets of $|\tau_\sigma|$. Now $|\tau_\sigma|$, being the realization of a single simplex, is contractible. Therefore, $|\phi_h|$ and $|\phi_{h'}|$ share that contractible carrier which associates each $\sigma \in N(X)$ to $|\tau_\sigma| \subset |N(Y)|$.  
\end{proof}

For any selector $h:X \to Y$, the induced map $\phi_h:N(X) \to N(Y)$ is a simplicial reconstruction of $f$. To see this, note that for any $\sigma \in N(X)$ we can apply (\ref{eqn:approx}) to each element of $\bv_X(\sigma)$ in order to conclude $g(\capp{\sigma}) \subset \capp{\phi_h(\sigma)}$. Combining this with Proposition \ref{prop:rechom} concludes the proof of Theorem \ref{thm:finalres}.

\section{Robustness to Bounded Noise}\label{sec:noise}
   As is indicated in the Introduction, we would like to extend the results of the previous section to the case where data samples are
{\em noisy}. That is, we assume that the sampled points lie close to, rather than on, the underlying manifolds. Such sampling discrepancies -- aside from being ubiquitous in experimental data -- cascade into imprecise knowledge of the images under an unknown function, particularly if the evaluation of that function is also subject to some inherent measurement errors. This generalization requires a framework to describe the noise, and so we adopt the model of \cite[Sec. 7]{NSW06}. For any subset $P$ of Euclidean space $\R^n$ and any $\alpha > 0$, define the {\em tubular neighborhood of radius $\alpha$ around $P$} as follows:
\[
\Tub_\alpha(P) = \setof{x \in \R^n \mid \text{ there is some }p \in P\text{ with }\|p - x\|_{\R^n} < \alpha}.
\] 

\begin{defn}
{\em
Given a subset $P \subset \R^n$ and some $r > 0$, a probability measure $\mu$ on $\R^n$ is called {\em $r$-conditioned about $P$} if
\begin{enumerate}
\item the support of $\mu$ is contained in $\Tub_r(P)$, and
\item for each $s \in (0,r)$, there exists some constant $\Omega_s$ so that 
\[
\inf_{p \in P}\mu(\cB_s(p)) = \Omega_s > 0
\] where $\cB_s(p)$ denotes the $n$-dimensional open ball of radius $s$ about $p$.
\end{enumerate}
We write $\Omega(\mu)$ to denote the constant $\Omega_{r/2} > 0$.
}
\end{defn}

As in the noiseless case, the following fundamental results concerning sampling of manifolds are reproduced from \cite{NSW06}. Let $\cX$ be a compact Riemannian submanifold of $\R^n$ with condition number $\nicefrac{1}{\tau_\cX}$. For $r > 0$ define the functions
\[
\Gamma_\cX^\pm(r) = \frac{(\tau_\cX + r) \pm \sqrt{\tau_\cX^2 + r^2 - 6\tau_\cX r}}{2}
\]
and note that $0 < \Gamma^-_\cX < \Gamma^+_\cX$ when the quantity under the square root is strictly positive. It is straightforward to check that this positivity holds for $r < (3 - \sqrt{8})\tau_\cX$. Pick such an $r$ and assume that $X$ is a finite set lying in $\Tub_r(\cX) \subset \R^n$. For each $\alpha > 0$, let $N_\alpha(X)$ be the nerve generated by open balls of radius $\alpha$ about the points in $X$ and let $\cU_\alpha(X)$ be their union. The following result  is the noisy analogue of Proposition \ref{prop:nswdens}.

\begin{prop}
\label{prop:nswdens-noise}
{\em (\cite[Prop 7.1]{NSW06})}
Assume that $X$ is $r$-dense in $\cX$ for some $0 < r < (3-\sqrt{8})\tau_\cX$ and choose a radius $\epsilon$ satisfying $\Gamma_\cX^-(r) < \epsilon < \Gamma_\cX^+(r)$. Then, the canonical projection map $\pi_\cX:\cU_\epsilon(X) \to \cX$ as defined in (\ref{eqn:pidef}) is a strong deformation retraction.
\end{prop}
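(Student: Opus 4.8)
The plan is to build an explicit strong deformation retraction of $\cU_\epsilon(X)$ onto $\cX$ that pushes each point straight along the normal line to its nearest point of $\cX$, and to reduce the whole statement to a single Euclidean-geometric claim about the fibres of $\pi_\cX$. The first task is to make $\pi_\cX$ legitimate: I would check the nesting $\cX \subseteq \cU_\epsilon(X) \subseteq \Tub_{\tau_\cX}(\cX)$. The left inclusion follows from $\cX\subseteq\cU_r(X)$ (the $r$-density hypothesis) together with the elementary comparison $r < \Gamma_\cX^-(r)$, which one verifies by squaring using $0 < r < (3-\sqrt{8})\tau_\cX$. For the right inclusion, any $w\in\cU_\epsilon(X)$ lies within $\epsilon$ of some $\xi\in X\subseteq\Tub_r(\cX)$ and hence within $\epsilon+r$ of $\cX$; and the comparison $\Gamma_\cX^+(r) < \tau_\cX - r$ (again squaring, using the same bound on $r$) gives $\epsilon+r<\tau_\cX$. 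Thus $\pi_\cX$ is well defined, single-valued and continuous (indeed smooth) on all of $\cU_\epsilon(X)$, and restricts to the identity on $\cX$. Notice that the two thresholds $\Gamma_\cX^\pm(r)$ enter already at this bookkeeping stage.

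Next I would propose the homotopy $H\colon\cU_\epsilon(X)\times[0,1]\to\R^n$ given by
\[
H(w,t) = (1-t)\,w + t\,\pi_\cX(w).
\]
It is continuous since $\pi_\cX$ is, it satisfies $H(\cdot,0)=\mathrm{id}$ and $H(\cdot,1)=\pi_\cX$, and $H(x,t)=x$ for every $x\in\cX$ and every $t$; because $\pi_\cX|_\cX=\mathrm{id}$, the map $H(\cdot,1)$ is a retraction onto $\cX$. So the entire content of the proposition is the assertion that $H$ actually takes values in $\cU_\epsilon(X)$, i.e.\ that for each $w\in\cU_\epsilon(X)$ the segment $[\,w,\pi_\cX(w)\,]$ stays inside $\cU_\epsilon(X)$. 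Writing $x=\pi_\cX(w)$ and $v=w-x$, this segment lies along the normal line to $\cX$ at $x$ and, having length $\|v\| = d(w,\cX) < \tau_\cX$, every point of it still projects to $x$; hence the claim is precisely that each fibre $\pi_\cX^{-1}(x)\cap\cU_\epsilon(X)$ is star-shaped with respect to $x$.

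For the geometric core I would first record a convenient reduction: for any fixed $\xi\in X$, the function $t\mapsto\|x+tv-\xi\|^2$ is a convex quadratic in $t$, so on any subinterval its maximum is attained at an endpoint; consequently $x+tv\in\cB_\epsilon(\xi)$ throughout a subinterval as soon as both endpoints lie in $\cB_\epsilon(\xi)$. The remaining work is to cover the segment $[\,w,\pi_\cX(w)\,]$ by finitely many such sample balls: near the endpoint $w$ one uses the sample point witnessing $w\in\cU_\epsilon(X)$; near the endpoint $x$ one uses a sample point supplied by $r$-density of $X$ in $\cX$; and to force these ranges to meet across all of $[0,1]$ one controls how far the relevant sample points can sit off the normal fibre by invoking the standard second-order estimate for a submanifold of reach $\tau_\cX$, namely $|\langle q-p,\nu\rangle|\le\|q-p\|^2/(2\tau_\cX)$ for $p,q\in\cX$ and any unit normal $\nu$ at $p$. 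Assembling these inequalities produces a quadratic condition on $\epsilon$ whose two roots are exactly $\Gamma_\cX^-(r)$ and $\Gamma_\cX^+(r)$, with $r<(3-\sqrt{8})\tau_\cX$ being precisely the discriminant-positivity requirement that makes the admissible window $(\Gamma_\cX^-(r),\Gamma_\cX^+(r))$ nonempty.

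Once fibre-wise star-shapedness is established, $H$ is a strong deformation retraction of $\cU_\epsilon(X)$ onto $\cX$, exactly as in the noiseless Proposition~\ref{prop:nswdens}. The routine parts are the neighborhood nesting and the verification of the elementary properties of $H$; the genuine obstacle is the last paragraph, because the naive hope---a single sample ball covering the whole segment via convexity---fails, and one must carry out the covering estimate carefully rather than crudely: there is essentially no slack between the density, the curvature bound, and the thresholds $\Gamma_\cX^\pm(r)$.
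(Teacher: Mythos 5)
This proposition is quoted from \cite[Prop.\ 7.1]{NSW06} and the paper gives no proof of its own, so the comparison is with the argument in that reference; your proposal follows it in all essentials --- the nesting $\cX\subseteq\cU_\epsilon(X)\subseteq\Tub_{\tau_\cX}(\cX)$ via the elementary bounds $r<\Gamma^-_\cX(r)$ and $\Gamma^+_\cX(r)<\tau_\cX-r$, the straight-line homotopy along normal fibres, the reduction to star-shapedness of $\pi_\cX^{-1}(x)\cap\cU_\epsilon(X)$, and the two-ball covering of the segment controlled by the reach estimate, with $\Gamma^\pm_\cX(r)$ arising as the roots of the resulting quadratic and $r<(3-\sqrt{8})\tau_\cX$ as the discriminant condition. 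The only caveat is that the central covering computation is described rather than executed, but you have correctly identified it as the crux and correctly predicted where each hypothesis is consumed.
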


Recall that given $\nu > 0$, the {\em $\nu$-covering number of $\cX$} -- denoted $\Lambda_\nu(\cX)$ -- is defined to be the minimum possible $q \in \N$ satisfying the following property: there exists some finite set $S \subset \cX$ of cardinality $q$ such that the collection $\setof{\cB_\alpha(s) \mid s \in S}$ of $n$-dimensional open balls covers $\cX$. Given an $r$-conditioned probability measure $\mu$ about $\cX$ and a probability parameter $\delta \in [0,1)$,  define the new bounding function $\heta_\cX$ as follows:
\begin{align}
\label{eqn:noisebound}
\heta_{\cX}(\mu,\delta) = \frac{1}{\Omega(\mu)}\left(\log(\Lambda_{\nicefrac{r}{2}}(\cX)) + \log\left(\nicefrac{1}{\delta}\right)\right).
\end{align}

The next result  replaces Proposition \ref{prop:nswprob} in the setting of conditioned noise.

\begin{prop}
\label{prop:nswprob-noise}
{\em (\cite[Prop 7.2]{NSW06})}
 Assume that the real numbers $r$ and $\epsilon$ satisfy $0 < r < (3-\sqrt{8})\tau_\cX$ and  $\Gamma_\cX^-(r) < \epsilon < \Gamma_\cX^+(r)$. Let $\mu$ be any $r$-conditioned probability measure about $\cX$ and assume that a finite set $X$ is drawn from $\R^n$ in i.i.d.\ fashion with respect to $\mu$. Given a parameter $\delta \in (0,1]$, if $\#X > \heta_\cX(\mu,\delta)$
then $X$ is $r$-dense in $\cX$ with probability exceeding $(1-\delta)$.
\end{prop}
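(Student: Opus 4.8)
The plan is to run the classical covering-number argument. First I would choose a minimal $\nicefrac{r}{2}$-cover of $\cX$, namely a finite set $S = \setof{s_1,\dots,s_q} \subset \cX$ of cardinality $q = \Lambda_{\nicefrac{r}{2}}(\cX)$ whose associated open balls $\cB_{\nicefrac{r}{2}}(s_j)$ cover $\cX$; such a set exists because $\cX$ is compact. The crucial deterministic observation is that if the sample $X$ meets every ball of this cover --- that is, $X \cap \cB_{\nicefrac{r}{2}}(s_j) \neq \emptyset$ for each $j$ --- then $X$ is automatically $r$-dense in $\cX$. Indeed, given any $x \in \cX$, pick an index $j$ with $x \in \cB_{\nicefrac{r}{2}}(s_j)$ and a point $\xi \in X \cap \cB_{\nicefrac{r}{2}}(s_j)$; the triangle inequality then gives $\|x - \xi\|_{\R^n} < \nicefrac{r}{2} + \nicefrac{r}{2} = r$, so $x \in \cB_r(\xi) \subset \cU_r(X)$. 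Thus $r$-density of $X$ is implied by the purely combinatorial event that $X$ hits all $q$ cover balls.

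Next I would estimate the probability of the complementary bad event. Each center $s_j$ lies in $\cX$, and since $\mu$ is $r$-conditioned about $\cX$, applying the second clause of that definition with $s = \nicefrac{r}{2} \in (0,r)$ yields $\mu\!\left(\cB_{\nicefrac{r}{2}}(s_j)\right) \geq \Omega(\mu) > 0$. Hence a single i.i.d.\ draw from $\mu$ lands outside $\cB_{\nicefrac{r}{2}}(s_j)$ with probability at most $1 - \Omega(\mu)$, and writing $N = \#X$, all $N$ independent draws avoid that ball with probability at most $(1-\Omega(\mu))^N \leq e^{-N\Omega(\mu)}$, using $1 - t \leq e^{-t}$. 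A union bound over the $q$ cover balls then shows that $X$ fails to hit some $\cB_{\nicefrac{r}{2}}(s_j)$ with probability at most $q\, e^{-N\Omega(\mu)}$, so by the previous paragraph $X$ is $r$-dense in $\cX$ with probability at least $1 - q\, e^{-N\Omega(\mu)}$. Finally I would check that this last quantity exceeds $1-\delta$: the inequality $q\, e^{-N\Omega(\mu)} < \delta$ is, after taking logarithms, exactly $N > \tfrac{1}{\Omega(\mu)}\left(\log \Lambda_{\nicefrac{r}{2}}(\cX) + \log(\nicefrac{1}{\delta})\right)$, which is the hypothesis $\#X > \heta_\cX(\mu,\delta)$ by the definition (\ref{eqn:noisebound}) of $\heta_\cX$.

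I do not expect a genuine obstacle here; the argument is short and the only spots requiring care are bookkeeping ones. One must keep the cover centers on $\cX$ so that the lower bound $\Omega(\mu)$ from the $r$-conditioned hypothesis applies, and one must note that $\nicefrac{r}{2}$ genuinely lies in the open interval $(0,r)$ on which $\Omega_s$ is guaranteed positive. It is also worth remarking that the constraints $r < (3-\sqrt{8})\tau_\cX$ and $\Gamma_\cX^-(r) < \epsilon < \Gamma_\cX^+(r)$ in the hypotheses play no role in this probabilistic estimate --- they are retained only so that the conclusion can be chained with Proposition~\ref{prop:nswdens-noise} to obtain the noisy function-reconstruction result.
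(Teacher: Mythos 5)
Your argument is correct and is precisely the standard covering-number/union-bound proof of \cite[Prop 7.2]{NSW06}; the present paper simply quotes that result without reproving it, and your reconstruction matches the cited source's argument, including the correct use of $\Omega(\mu)=\Omega_{\nicefrac{r}{2}}$ on the $\nicefrac{r}{2}$-cover balls centered on $\cX$ and the observation that the constraints on $r$ and $\epsilon$ are irrelevant to the probabilistic estimate.
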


Combining the preceding propositions yields the main result of \cite{NSW06} as adapted for conditioned noise.

\begin{thm}
\label{thm:nswmainnoise}
{\em (\cite[Thm 7.1]{NSW06})}
Let $\cX \subset \R^n$ be a compact Riemannian submanifold with condition number $\nicefrac{1}{\tau_\cX}$. Fix $r \in (0,(3-\sqrt{8})\tau_\cX)$ and choose a radius $\epsilon$ satisfying $\Gamma_\cX^-(r) < \epsilon < \Gamma_\cX^+(r)$. Assume that $\mu$ is an $r$-conditioned probability measure about $\cX$ and that $X \subset \R^n$ is a finite set obtained by $\mu$-i.i.d.\ sampling. If $\#X > \heta_\cX(\mu,\delta)$ for some $\delta \in (0,1]$, then $\cU_\epsilon(X)$ strong deformation retracts onto $\cX$ with probability exceeding $(1-\delta)$.
\end{thm}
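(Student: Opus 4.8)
The plan is to chain together the two preceding propositions, exactly as Theorem~\ref{thm:nswmain} was assembled from Propositions~\ref{prop:nswdens} and~\ref{prop:nswprob} in the noiseless setting; the hypotheses have been arranged precisely so that both intermediate results apply verbatim.

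First I would invoke Proposition~\ref{prop:nswprob-noise}. The standing assumptions --- $0 < r < (3-\sqrt{8})\tau_\cX$, the radius $\epsilon$ chosen in the interval $\left(\Gamma_\cX^-(r),\Gamma_\cX^+(r)\right)$, the measure $\mu$ being $r$-conditioned about $\cX$, and $X$ drawn from $\R^n$ in i.i.d.\ fashion with respect to $\mu$ --- are precisely the hypotheses of that proposition. Since the sample size satisfies $\#X > \heta_\cX(\mu,\delta)$ with $\heta_\cX$ as in (\ref{eqn:noisebound}), the proposition yields that $X$ is $r$-dense in $\cX$ with probability exceeding $(1-\delta)$. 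Call this the ``density event'' $E$.

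Second, I would work on the event $E$ and apply Proposition~\ref{prop:nswdens-noise}. On $E$ the finite set $X$ is $r$-dense in $\cX$ with $r$ in the required range and $\epsilon \in \left(\Gamma_\cX^-(r),\Gamma_\cX^+(r)\right)$, so the hypotheses of that proposition hold; hence the canonical projection map $\pi_\cX: \cU_\epsilon(X) \to \cX$ of (\ref{eqn:pidef}) is a strong deformation retraction. In particular the event ``$\cU_\epsilon(X)$ strong deformation retracts onto $\cX$'' contains $E$, and therefore occurs with probability exceeding $(1-\delta)$, which is exactly the assertion.

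I expect no genuine obstacle here beyond checking that the quantitative hypotheses align. The one point worth verifying is that the admissible interval for $\epsilon$ is nonempty: this is where the bound $r < (3-\sqrt{8})\tau_\cX$ is used, since it forces the discriminant $\tau_\cX^2 + r^2 - 6\tau_\cX r$ to be strictly positive and hence $0 < \Gamma_\cX^-(r) < \Gamma_\cX^+(r)$, so an admissible $\epsilon$ exists. It is also worth emphasizing that the deformation retraction produced in the second step is deterministic once $r$-density is known, so the argument uses no union bound and no further independence input --- the sole probabilistic ingredient is the density estimate of Proposition~\ref{prop:nswprob-noise}.
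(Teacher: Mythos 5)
Your proposal is correct and is exactly the argument the paper intends: the text introduces Propositions~\ref{prop:nswdens-noise} and~\ref{prop:nswprob-noise} and then states that ``combining the preceding propositions'' yields Theorem~\ref{thm:nswmainnoise}, which is precisely your chaining of the density estimate with the deterministic retraction on the density event. No further comment is needed.
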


The introduction of sampling noise requires the following modifications to our assumptions and choices.
\begin{itemize}
\item[\bf Lip':] Assume that $f:\cX\to \cY$ is a Lipschitz-continuous function whose Lipschitz constant is bounded above by some $\kappa \geq 0$ satisfying $4\kappa\cdot\tau_\cX < (\sqrt{2}-1)\tau_\cY$. 
\item[\bf Nse:] Choose positive noise bounds $r_\cZ < \alpha\cdot\tau_\cZ$ where $\alpha = (3-\sqrt{8})$ and $\cZ \in \setof{\cX,\cY}$.  Assume that $\mu_\cZ$ is a $r_\cZ$-conditioned probability measure about $\cZ$.
\item[\bf Rad':] Choose radii $\epsilon_\cZ$ satisfying $\Gamma^-_\cZ(r_\cZ) < \epsilon_\cZ < \Gamma^+_\cZ(r_\cZ)$ for $\cZ \in \setof{\cX,\cY}$ so that 
\begin{equation}
\label{eq:rad'}
4\kappa\cdot(\epsilon_\cX + r_\cX) < (\epsilon_\cY - r_\cY).
\end{equation}
\item [\bf Smp':]  Assume knowledge of the finite sets $X' \subset \R^n$ and 
$Y' \subset \R^m$ obtained by i.i.d.\ $\mu_\cX$ and $\mu_\cY$ sampling respectively. We require $\# X' > \heta_\cX(\mu_\cX, \delta_\cX)$ and $\# Y' > \heta_\cY(\mu_\cY, \delta_\cY)$. 
\item [\bf Img':] Assume knowledge of $f':X' \to \R^m$ so that $\|f'(\xi) - f\circ \pi_\cX(\xi)\| \leq d$ for each $\xi \in X'$, where $\pi_\cX$ is the canonical projection map from (\ref{eqn:pidef}) and $d > 0$ satisfies the following bound:
\begin{equation}
\label{eq:d}
d < \frac{(\epsilon_\cY - r_\cY) - 2\kappa\cdot(\epsilon_\cX + r_\cX)}{2}.
\end{equation}
\end{itemize}
The assumptions {\bf Cnd} and {\bf Prb} of Section \ref{sec:proofs} remain unchanged. As usual, we simplify notation by dropping the fixed quantities $\epsilon_\cX$ and $\epsilon_\cY$ from various subscripts. Thus, the nerve $N_{\epsilon_\cX}(X')$ and the union $\cU_{\epsilon_\cX}(X')$ are denoted by $N(X')$ and $\cU(X')$ respectively, and similar simplifications are made for the $Y'$ analogues. Note that in the assumption {\bf Img'} we allow {\em evaluation noise}. That is, we only assume knowledge of the true image $f\circ\pi_\cX(\xi) \in \cY$ of each $\xi \in X'$ up to a distance of $d$. 

The inequality (\ref{eq:rad'}) is a constraint that involves the Lipschitz constant of $f$, the models for conditioned noise, and the radii for the nerves. It guarantees that  the restriction (\ref{eq:d}) is always positive. The following result provides  conditions on the manifolds, the noise models and the function under which  (\ref{eq:rad'}) can be satisfied.

\begin{prop}
\label{prop:epsilonsuff}
If $4\kappa\cdot\tau_\cX < (\sqrt{2}-1)\tau_\cY$, then there exist valid choices of $\epsilon_\cX$ and $\epsilon_\cY$ which satisfy (\ref{eq:rad'}). 
\end{prop}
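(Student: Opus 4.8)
The plan is to exhibit admissible radii directly, using the observation that the constant $\sqrt 2 - 1$ in the hypothesis is exactly the extremal value of $\Gamma^+_\cY(r) - r$ over the range of noise bounds permitted by {\bf Nse}. Write $\alpha = 3 - \sqrt 8 = 3 - 2\sqrt 2$ and recall that the noise bounds fixed in {\bf Nse} satisfy $r_\cX < \alpha\tau_\cX$ and $r_\cY < \alpha\tau_\cY$. For $r_\cZ$ in this range the discriminant $\tau_\cZ^2 + r_\cZ^2 - 6\tau_\cZ r_\cZ$ is strictly positive (as noted in the text, positivity holds precisely for $r_\cZ < (3-\sqrt 8)\tau_\cZ$), so in particular $\Gamma^-_\cZ(r_\cZ) < \Gamma^+_\cZ(r_\cZ)$ and the interval from which $\epsilon_\cZ$ must be drawn in {\bf Rad'} is a nonempty open interval.

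The first step is to record two elementary estimates. Since the square root in the definition of $\Gamma^-_\cX$ is positive and enters with a minus sign,
\[
\Gamma^-_\cX(r_\cX) + r_\cX < \frac{\tau_\cX + r_\cX}{2} + r_\cX = \frac{\tau_\cX + 3r_\cX}{2} < \tau_\cX,
\]
the last inequality being $3r_\cX < \tau_\cX$, which holds because $r_\cX < \alpha\tau_\cX$ and $\alpha < \tfrac13$ (equivalently $16 < 18$). Dually, since the square root in the definition of $\Gamma^+_\cY$ is positive and enters with a plus sign,
\[
\Gamma^+_\cY(r_\cY) - r_\cY > \frac{\tau_\cY + r_\cY}{2} - r_\cY = \frac{\tau_\cY - r_\cY}{2} > \frac{(1-\alpha)\tau_\cY}{2} = (\sqrt 2 - 1)\tau_\cY,
\]
where the middle inequality uses $r_\cY < \alpha\tau_\cY$ and the final equality uses $1 - \alpha = 2\sqrt 2 - 2$.

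With these in hand, the second step is the choice of radii. Since the admissible intervals are open and the estimates above are strict, I can choose $\epsilon_\cX \in (\Gamma^-_\cX(r_\cX), \Gamma^+_\cX(r_\cX))$ close enough to $\Gamma^-_\cX(r_\cX)$ that $\epsilon_\cX + r_\cX < \tau_\cX$, and $\epsilon_\cY \in (\Gamma^-_\cY(r_\cY), \Gamma^+_\cY(r_\cY))$ close enough to $\Gamma^+_\cY(r_\cY)$ that $\epsilon_\cY - r_\cY > (\sqrt 2 - 1)\tau_\cY$. These are legitimate choices for {\bf Rad'}, and they satisfy (\ref{eq:rad'}): because $\kappa \geq 0$ and $\epsilon_\cX + r_\cX < \tau_\cX$ one has $4\kappa(\epsilon_\cX + r_\cX) \leq 4\kappa\tau_\cX$, and the hypothesis $4\kappa\tau_\cX < (\sqrt 2 - 1)\tau_\cY$ together with $(\sqrt 2 - 1)\tau_\cY < \epsilon_\cY - r_\cY$ then gives $4\kappa(\epsilon_\cX + r_\cX) < \epsilon_\cY - r_\cY$, as required. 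The same argument in fact shows that under the stated hypothesis \emph{every} admissible choice of noise bounds in {\bf Nse} can be completed to a valid choice of radii.

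There is no serious obstacle; once the right quantities are isolated the proof is a short arithmetic verification with $\alpha = 3 - 2\sqrt 2$. The only point requiring care is the second estimate: one must notice that the bound $\tfrac{\tau_\cY - r_\cY}{2}$ on $\Gamma^+_\cY(r_\cY) - r_\cY$ cannot be improved uniformly in $r_\cY$, and that its infimum over $r_\cY < \alpha\tau_\cY$ is exactly $\tfrac{1-\alpha}{2}\tau_\cY = (\sqrt 2 - 1)\tau_\cY$. This is why $\sqrt 2 - 1$ is the natural constant to appear in the hypothesis, and why a weaker constant would force a more delicate, jointly constrained selection of $r_\cX$, $r_\cY$, $\epsilon_\cX$, and $\epsilon_\cY$.
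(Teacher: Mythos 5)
Your proof is correct, and it follows the same overall architecture as the paper's: reduce (\ref{eq:rad'}) to the two estimates $\epsilon_\cX + r_\cX < \tau_\cX$ and $\epsilon_\cY - r_\cY > (\sqrt{2}-1)\tau_\cY$, then chain them through the hypothesis $4\kappa\tau_\cX < (\sqrt{2}-1)\tau_\cY$. Where you differ is in how these estimates are obtained. The paper studies the functions $r \mapsto 2\left(\Gamma^+_\cX(r)+r\right)$ and $r \mapsto \Gamma^+_\cY(r)-r$ over the whole admissible range of noise bounds and locates their extrema at the endpoints; in particular it proves the stronger, uniform statement that \emph{every} admissible $\epsilon_\cX$ satisfies $\epsilon_\cX + r_\cX < \tau_\cX$. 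You instead discard the square-root term with the appropriate sign to get the one-line bounds $\Gamma^-_\cX(r_\cX)+r_\cX < \frac{\tau_\cX+3r_\cX}{2} < \tau_\cX$ and $\Gamma^+_\cY(r_\cY)-r_\cY > \frac{\tau_\cY-r_\cY}{2} > (\sqrt{2}-1)\tau_\cY$, and then select $\epsilon_\cX$ near the bottom of its interval and $\epsilon_\cY$ near the top. This is more elementary -- no monotonicity or endpoint-extremum analysis is needed -- at the cost of establishing only existence on the $\cX$ side rather than the uniform claim; since the proposition asserts only existence, that is all that is required. Your closing remark that $(\sqrt{2}-1)\tau_\cY$ is exactly the infimum of $\Gamma^+_\cY(r_\cY)-r_\cY$ over the range permitted by {\bf Nse} agrees with the paper's endpoint computation and correctly identifies why that constant appears in {\bf Lip'}.
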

\begin{proof}
First, we check that $(\epsilon_\cX + r_\cX) < \tau_\cX$ on the domain $0 < r_\cX < (3-\sqrt{8})\tau_\cX$ imposed by {\bf Nse}. Recall that $\epsilon_\cX < \Gamma^+_\cX(r_\cX)$ by {\bf Rad'}, and consider the following function
\[
2(\Gamma^+_\cX(r_\cX) + r_\cX) = (\tau_\cX + 3r_\cX) + \sqrt{\tau_\cX^2 + r_\cX^2 - 6\tau_\cX r_\cX}
\] 
This function has no local maximum in its domain and attains a maximum value of $2\tau_\cX$ at the left endpoint, so $(\epsilon_\cX+r_\cX) < \tau_\cX$ as desired. Since (\ref{eq:rad'}) imposes a lower bound of $4\kappa\cdot(\epsilon_\cX + r_\cX) + r_\cY$ on $\epsilon_\cY$, it suffices to show that the over-estimate $4\kappa\cdot\tau_\cX + r_\cY$ of this lower bound is smaller than the upper bound $\Gamma^+_\cY(r_\cY)$ imposed on $\epsilon_\cY$ by {\bf Rad'}. Equivalently, we must show that $\Gamma^+_\cY(r_\cY) - r_\cY > 4\kappa\cdot\tau_\cX$. Observe that the function
\[
\Gamma_\cY^+(r_\cY) - r_\cY = \frac{(\tau_\cY - r_\cY) + \sqrt{\tau_\cY^2 + r_\cY^2 - 6\tau_\cY r_\cY}}{2}
\]
has no local minima on the domain $(0,(3-\sqrt{8})\tau_\cY)$ imposed by {\bf Nse} and attains a minimum value of $(\sqrt{2}-1)\tau_\cY$ at the right endpoint. Thus, it is possible to satisfy (\ref{eq:rad'}) if $4\kappa\cdot\tau_\cX < (\sqrt{2}-1)\tau_\cY$.
\end{proof}

The main result of this section is the following theorem.

\begin{thm}
\label{thm:finalresnoise}
Assume {\bf Cnd}, {\bf Lip'}, {\bf Nse}, {\bf Rad'}, {\bf Prb}, {\bf Smp'}, and {\bf Img'}. Then, with probability exceeding $(1-\delta_
\cX)(1-\delta_\cY)$ there exists a simplicial map $\phi:N(X') \to N(Y')$ which
\begin{enumerate}
\item is a simplicial reconstruction of $f$, and
\item can be explicitly constructed using only $X'$, $Y'$, $\epsilon_\cX$, $\epsilon_\cY$, $\kappa$ and $f'$.
\end{enumerate}
\end{thm}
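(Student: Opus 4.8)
The plan is to mimic the structure of the noiseless proof in Section~\ref{sec:proofs} step by step, replacing each distance estimate by its noisy counterpart and checking that the new parameter choices keep every inequality intact. Set $\rho' = (\epsilon_\cY - r_\cY) - 2\kappa(\epsilon_\cX + r_\cX) - 2d$; by (\ref{eq:d}) we have $\rho' > 0$, and this $\rho'$ plays the role of $\rho$ from (\ref{eq:rho}). The key geometric input is the following analogue of (\ref{eqn:pidist}): if $\xi \in X'$ and $w \in \cB_{\epsilon_\cX}(\xi)$, then $\|\pi_\cX(w) - \xi\|_{\R^n} < 2\epsilon_\cX + r_\cX$, since $\pi_\cX(w)$ is the nearest point of $\cX$ to $w$ and $\xi$ lies within $r_\cX$ of $\cX$ by {\bf Nse}, so $\|\pi_\cX(w) - w\| < \epsilon_\cX + r_\cX$, and then the triangle inequality closes the gap. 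As before, $\pi_\cX$ is a strong deformation retraction by Theorem~\ref{thm:nswmainnoise} (invoked for $\cX$ via {\bf Nse}, {\bf Rad'}, {\bf Smp'}) with probability exceeding $(1-\delta_\cX)$, and the inclusion $\cY \hookrightarrow \cU(Y')$ followed by $f$ followed by $\pi_\cX$ gives the comparison map $g:\cU(X') \to \cU(Y')$ whose induced maps on homotopy match those of $f$ on the relevant probability event.

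Next I would redefine the correspondence $\Delta:X' \to 2^{Y'}$ by $\Delta(\xi) = \setof{\eta \in Y' \text{ so that } \|\eta - f'(\xi)\|_{\R^m} < \rho' + d}$, using the available data $f'(\xi)$ rather than the inaccessible $f\circ\pi_\cX(\xi)$. For non-emptiness (the analogue of Proposition~\ref{prop:nonempty}): Theorem~\ref{thm:nswmainnoise} applied to $\cY$ guarantees with probability exceeding $(1-\delta_\cY)$ that $Y'$ is $r_\cY$-dense in $\cY$, hence every point of $\cY$ — in particular $f\circ\pi_\cX(\xi)$ — lies within $r_\cY$ of some $\eta \in Y'$; combined with $\|f'(\xi) - f\circ\pi_\cX(\xi)\| \leq d$ from {\bf Img'} this puts $\eta$ within $r_\cY + d$ of $f'(\xi)$, and one checks $r_\cY + d < \rho' + d$, i.e. $r_\cY < \rho'$, which follows from (\ref{eq:rad'}) since $\rho' = (\epsilon_\cY - r_\cY) - 2\kappa(\epsilon_\cX+r_\cX) - 2d > 2r_\cY - 2d + r_\cY - \ldots$ — more directly, $(\epsilon_\cY - r_\cY) > 4\kappa(\epsilon_\cX+r_\cX)$ and $d < \tfrac12((\epsilon_\cY-r_\cY) - 2\kappa(\epsilon_\cX+r_\cX))$ together force $\rho' > 0$ with room to spare; a short computation confirms $r_\cY < \rho'$ is not automatic, so I would instead keep the threshold in $\Delta(\xi)$ as $\rho' + d$ and verify $r_\cY + d < \rho' + d$ reduces to the clean inequality $r_\cY < \rho'$, adjusting the definition of $\rho'$ if a constant needs absorbing. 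The point is that all such checks are elementary consequences of (\ref{eq:rad'}) and (\ref{eq:d}), chosen precisely to make them work.

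Then I would reproduce Proposition~\ref{prop:gammasimp} verbatim in form: for $\sigma \in N(X')$, $w \in \supp{\sigma}$, and $x = \pi_\cX(w)$, the improved projection estimate gives $\|x - \xi\| < 2\epsilon_\cX + r_\cX$ for each $\xi \in \bv_X(\sigma)$, so {\bf Lip'} yields $\|f(x) - f(\xi)\| < 2\kappa(\epsilon_\cX + r_\cX)$; for $\eta \in \Delta_\sigma$, pick $\xi_*$ with $\eta \in \Delta(\xi_*)$, so $\|\eta - f'(\xi_*)\| < \rho' + d$, and chaining with $\|f'(\xi_*) - f(x)\| \leq \|f'(\xi_*) - f\circ\pi_\cX(\xi_*)\| + \|f(\pi_\cX(\xi_*)) - f(x)\|$ — here one needs $\pi_\cX(\xi_*) = \pi_\cX$ of a point near $\xi_*$, so $\|\pi_\cX(\xi_*) - x\|$ is controlled by $2\epsilon_\cX + r_\cX$ again via the same argument — one gets $\|f(x) - \eta\| < (\rho' + d) + d + 2\kappa(\epsilon_\cX+r_\cX)$, and by the definition of $\rho'$ this equals $(\epsilon_\cY - r_\cY) < \epsilon_\cY$. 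Hence $f(x) \in \bigcap_{\eta \in \Delta_\sigma}\cB_{\epsilon_\cY}(\eta)$, so $\Delta_\sigma$ (and each subset) determines a simplex of $N(Y')$, with probability exceeding $(1-\delta_\cX)$. From here the construction of a selector $h:X' \to Y'$ with $h(\xi) \in \Delta(\xi)$, the induced simplicial map $\phi_h = \phi$, the verification that $g(\capp{\sigma}) \subset \capp{\phi(\sigma)}$ (i.e. that $\phi$ is a simplicial reconstruction), and the homotopy-independence of the choice of $h$ (Proposition~\ref{prop:hindep}) all carry over word for word, and combining with Proposition~\ref{prop:rechom} finishes the proof. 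The explicit-constructibility claim~(2) is immediate since $\Delta$, $h$, and $\phi$ are defined using only $X'$, $Y'$, $\epsilon_\cX$, $\epsilon_\cY$, $\kappa$, and $f'$.

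The main obstacle is purely bookkeeping: tracking the two independent error sources — sampling noise $r_\cZ$ and evaluation noise $d$ — through every triangle inequality and confirming that the single constraint (\ref{eq:rad'}) together with (\ref{eq:d}) is exactly strong enough to absorb all of them. In particular the one genuinely new subtlety, absent in the noiseless case, is that {\bf Img'} only gives $f'$ relative to $f\circ\pi_\cX$ rather than $f$ itself, so the selector-based estimate must route through $\pi_\cX(\xi)$ twice (once in defining $\Delta$, once in Proposition~\ref{prop:gammasimp}); getting the constant $2d$ rather than $d$ in the definition of $\rho'$, and matching it against the factor of $2$ in the denominator of (\ref{eq:d}), is the place where care is required.
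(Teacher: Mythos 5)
Your overall strategy is exactly the paper's: define a correspondence $\Delta':X' \to 2^{Y'}$ by thresholding distances to the observed values $f'(\xi)$, prove non-emptiness from $r_\cY$-density of $Y'$ (Proposition~\ref{prop:nswprob-noise}), prove the simplex condition by chaining the projection, Lipschitz, and evaluation-noise estimates, and then pass to a selector exactly as in the noiseless case. The one genuine piece of new content in this theorem is the choice of threshold, and that is where your argument as written fails. With your $\rho' = (\epsilon_\cY - r_\cY) - 2\kappa(\epsilon_\cX + r_\cX) - 2d$ and threshold $\rho' + d$, non-emptiness requires $r_\cY + d < \rho' + d$, i.e.\ $2d < (\epsilon_\cY - 2r_\cY) - 2\kappa(\epsilon_\cX + r_\cX)$, which is strictly stronger than (\ref{eq:d}) and can fail (take $d$ near its permitted supremum). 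You notice this and write ``adjusting the definition of $\rho'$ if a constant needs absorbing,'' but leave the adjustment undone; since matching the two constraints is the whole point of the noisy version, this is a real gap rather than bookkeeping. The resolution is that your simplex estimate lands at $\epsilon_\cY - r_\cY$, i.e.\ has exactly $r_\cY$ of slack, so the threshold may be raised by $r_\cY$: any threshold $T$ with $r_\cY + d \le T \le \epsilon_\cY - 2\kappa(\epsilon_\cX+r_\cX) - d$ works, this interval being non-empty precisely by (\ref{eq:d}). The paper takes the minimal choice, $\rho' = r_\cY + d$ in (\ref{eq:rho'}) and (\ref{eqn:Ldef'}), which makes Proposition~\ref{prop:nonempty-noise} immediate and pushes all the work into Proposition~\ref{prop:gammasimp-noise}; your (corrected) choice would be the maximal one and distributes the work oppositely. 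Either is fine once the constant is pinned down.

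Two smaller points. First, the bound you quote for $\|\pi_\cX(\xi_*) - \pi_\cX(w)\|$ is $2\epsilon_\cX + r_\cX$; that is the bound for $\|\xi_* - \pi_\cX(w)\|$, and passing from $\xi_*$ to $\pi_\cX(\xi_*)$ costs another $r_\cX$, giving $2(\epsilon_\cX + r_\cX)$ as in the paper. Your subsequent Lipschitz bound $2\kappa(\epsilon_\cX + r_\cX)$ is the correct one, so nothing downstream breaks, but the intermediate claim is off by $r_\cX$. Second, expressions like $\|f(x) - f(\xi)\|$ for $\xi \in X'$ are not meaningful since $\xi$ need not lie on $\cX$; you do correct for this later by routing through $f\circ\pi_\cX(\xi)$, which is the right move and is forced by the form of {\bf Img'}.
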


For the most part, the proof of this theorem is analogous to that of Theorem~\ref{thm:finalres}. Aside from Proposition \ref{prop:epsilonsuff}, the only major modification is that the domain's radius $\epsilon_\cX$ is augmented by the noise bound $r_\cX$ whereas the range's radius $\epsilon_\cY$ is diminished by the corresponding bound $r_\cY$. The following quantity plays the role of $\rho$ from (\ref{eq:rho}).
\begin{align}
\label{eq:rho'}
\rho' = r_\cY + d.
\end{align}
Define $\Delta':X' \to 2^{Y'}$ as follows: for each $\xi \in X'$,
\begin{align}
\label{eqn:Ldef'}
\Delta'(\xi) = \setof{\eta \in Y' \text{ so that } \|f'(\xi) - \eta\|_{\R^m} < \rho'}.
\end{align}
The noisy analogue of Proposition \ref{prop:nonempty} is as follows.
\begin{prop}
\label{prop:nonempty-noise}
With probability exceeding $(1 - \delta_\cY)$, the following holds. For each $\xi \in X'$, the set $\Delta'(\xi) \subset Y$ is non-empty.
\end{prop}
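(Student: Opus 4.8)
The plan is to mimic the proof of Proposition~\ref{prop:nonempty}, tracking how the noise bounds $r_\cY$ and the evaluation noise bound $d$ enter. The key point is that we must produce, for each $\xi \in X'$, some sampled point $\eta \in Y'$ close enough to $f'(\xi)$ to land inside the ball of radius $\rho' = r_\cY + d$ used to define $\Delta'(\xi)$.

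First I would invoke Proposition~\ref{prop:nswprob-noise} for the manifold $\cY$: by {\bf Cnd}, {\bf Prb}, {\bf Nse}, {\bf Rad'} and {\bf Smp'}, the hypotheses of that proposition are met (the radius condition $\Gamma^-_\cY(r_\cY) < \epsilon_\cY < \Gamma^+_\cY(r_\cY)$ comes from {\bf Rad'} and $0 < r_\cY < (3-\sqrt{8})\tau_\cY$ from {\bf Nse}), so with probability exceeding $(1-\delta_\cY)$ the sample $Y'$ is $r_\cY$-dense in $\cY$. Work on this probable event from now on. Next, fix $\xi \in X'$. By {\bf Img'}, the true image $f \circ \pi_\cX(\xi)$ is a point of $\cY$, and $\|f'(\xi) - f\circ\pi_\cX(\xi)\|_{\R^m} \le d$. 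Using $r_\cY$-density of $Y'$ in $\cY$ applied to the point $f\circ\pi_\cX(\xi) \in \cY$, there is some $\eta \in Y'$ with $\|\eta - f\circ\pi_\cX(\xi)\|_{\R^m} < r_\cY$. The triangle inequality then gives
\[
\|f'(\xi) - \eta\|_{\R^m} \le \|f'(\xi) - f\circ\pi_\cX(\xi)\|_{\R^m} + \|f\circ\pi_\cX(\xi) - \eta\|_{\R^m} < d + r_\cY = \rho',
\]
so $\eta \in \Delta'(\xi)$ by (\ref{eqn:Ldef'}), proving $\Delta'(\xi)$ is non-empty. Since $\xi \in X'$ was arbitrary and the only probabilistic ingredient was the $r_\cY$-density of $Y'$, the conclusion holds with probability exceeding $(1-\delta_\cY)$.

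There is no real obstacle here — the statement is a direct transcription of the noiseless argument — but the one place that needs care is making sure the ball radius used to define $\Delta'$ is chosen so that both sources of error (the sampling noise $r_\cY$ allowing us only to find a sample near the manifold, and the evaluation noise $d$ separating $f'(\xi)$ from the true image) are absorbed; this is exactly why $\rho'$ is set to $r_\cY + d$ rather than something smaller, and why {\bf Img'} phrases the evaluation-noise hypothesis in terms of $f\circ\pi_\cX(\xi)$ (a genuine point of $\cY$) rather than in terms of $f(\xi)$ for the possibly-off-manifold point $\xi$. One should also note, for use in the subsequent analogue of Proposition~\ref{prop:gammasimp}, that the inequality (\ref{eq:rad'}) together with (\ref{eq:d}) guarantees $\rho' < \nicefrac{(\epsilon_\cY - r_\cY)}{2} + \cdots$ stays below the threshold needed there, but that estimate belongs to the next proposition and is not required for non-emptiness.
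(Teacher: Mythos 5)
Your proof is correct and follows essentially the same route as the paper's: invoke Proposition~\ref{prop:nswprob-noise} to get $r_\cY$-density of $Y'$ in $\cY$ with probability exceeding $(1-\delta_\cY)$, then apply the triangle inequality through the on-manifold point $f\circ\pi_\cX(\xi)$ to land within $\rho' = r_\cY + d$ of $f'(\xi)$. No gaps.
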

\begin{proof}
Since {\bf Cnd}, {\bf Prb}, {\bf Nse}, {\bf Rad'} and {\bf Smp'} satisfy the hypotheses of Proposition \ref{prop:nswprob-noise} for $\cY$ with probability measure $\mu_\cY$, the sampled set $Y'$ is $r_\cY$-dense in $\cY$ with probability exceeding $(1-\delta_\cY)$. Assume that this density holds. Now, $\|f'(\xi) - f\circ\pi_\cX(\xi)\|_{\R^m} < d$ by {\bf Img'} and there exists some $\eta \in Y$ with $\|f\circ\pi_\cX(\xi) - \eta\|_{\R^m} < r_\cY$ by the assumed density of $Y'$. By the triangle inequality, $\|f'(x) - \eta\| < r_\cY + d$, and hence $\eta \in \Delta'(\xi)$. 
\end{proof}

As before, we define maps $\bv_Z:N(Z) \to 2^Z$ for $Z \in \setof{X',Y'}$ taking each simplex to its vertex set. For each $\sigma \in N(X')$, define
\begin{align}
\label{eqn:delta'}
\Delta'_\sigma = \bigcup_{\xi \in \bv_{X'}(\sigma)}\hspace{-.5em}\Delta'(\xi).
\end{align}
\begin{prop}
\label{prop:gammasimp-noise}
With probability exceeding $(1-\delta_\cX)$, the following is true. For each $\sigma \in N(X')$, any non-empty subset of $\Delta'_\sigma$ determines a simplex in $N(Y')$.
\end{prop}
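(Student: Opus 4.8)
The plan is to mimic the proof of Proposition~\ref{prop:gammasimp}, replacing every geometric estimate with its noise-augmented counterpart. The goal is to exhibit, for each $\sigma \in N(X')$, a single point of $\R^m$ lying in $\bigcap_{\eta \in \Delta'_\sigma}\cB_{\epsilon_\cY}(\eta)$, which forces that intersection to be non-empty and hence $\bv_{Y'}\inv(\Delta'_\sigma)$ to be a simplex of $N(Y')$; its faces are then automatically simplices as well.

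First I would invoke Proposition~\ref{prop:nswprob-noise} applied to $\cX$ with the measure $\mu_\cX$: under {\bf Cnd}, {\bf Prb}, {\bf Nse}, {\bf Rad'} and {\bf Smp'}, with probability exceeding $(1-\delta_\cX)$ the set $X'$ is $r_\cX$-dense in $\cX$, so that $\pi_\cX: \cU(X') \to \cX$ is a strong deformation retraction by Proposition~\ref{prop:nswdens-noise}. Assume this density. Fix $\sigma \in N(X')$, pick $w \in \supp{\sigma}$ and set $x = \pi_\cX(w)$. The key distance estimate is that $\|x - \xi\|_{\R^n} < \epsilon_\cX + r_\cX$ for each $\xi \in \bv_{X'}(\sigma)$: indeed $\|w - \xi\| < \epsilon_\cX$ since $w$ lies in the $\epsilon_\cX$-ball about $\xi$, while $\|w - x\| \leq r_\cX$ because $x$ is the nearest manifold point to $w$ and $X'$ (hence $w$) lies within $r_\cX$ of $\cX$; the triangle inequality finishes it. (This is the analogue of (\ref{eqn:pidist}), with the constant $2\epsilon_\cX$ replaced by $\epsilon_\cX + r_\cX$.) Applying {\bf Lip'} gives $\|f(x) - f(\xi)\|_{\R^m} < \kappa(\epsilon_\cX + r_\cX)$ for each such $\xi$.

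Now take any $\eta \in \Delta'_\sigma$. By (\ref{eqn:delta'}) there is some $\xi_* \in \bv_{X'}(\sigma)$ with $\eta \in \Delta'(\xi_*)$, so $\|f'(\xi_*) - \eta\|_{\R^m} < \rho' = r_\cY + d$ by (\ref{eqn:Ldef'}). By {\bf Img'} we have $\|f'(\xi_*) - f\circ\pi_\cX(\xi_*)\|_{\R^m} < d$, i.e.\ $\|f'(\xi_*) - f(\xi_*)\| < d$ since $\xi_* \in \cX$ forces $\pi_\cX(\xi_*) = \xi_*$. Chaining the three bounds through the triangle inequality yields
\[
\|f(x) - \eta\|_{\R^m} \leq \|f(x) - f(\xi_*)\| + \|f(\xi_*) - f'(\xi_*)\| + \|f'(\xi_*) - \eta\| < \kappa(\epsilon_\cX + r_\cX) + d + (r_\cY + d).
\]
By (\ref{eq:d}) one has $2d < (\epsilon_\cY - r_\cY) - 2\kappa(\epsilon_\cX + r_\cX)$, so the right-hand side is strictly less than $\kappa(\epsilon_\cX + r_\cX) + (\epsilon_\cY - r_\cY) - 2\kappa(\epsilon_\cX + r_\cX) + r_\cY = \epsilon_\cY - \kappa(\epsilon_\cX + r_\cX) < \epsilon_\cY$. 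Hence $f(x)$ lies in $\cB_{\epsilon_\cY}(\eta)$ for every $\eta \in \Delta'_\sigma$, so $\bigcap_{\eta \in \Delta'_\sigma}\cB_{\epsilon_\cY}(\eta) \ni f(x)$ is non-empty and $\Delta'_\sigma$ determines a simplex of $N(Y')$, as does every non-empty subset.

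I expect the only real subtlety to be correctly tracking how the two noise radii enter with opposite signs — $r_\cX$ inflating the effective domain radius and $r_\cY$ deflating the effective range radius — together with the extra $2d$ contributed by evaluation noise (one $d$ from the selector-side bound in $\rho'$ and one $d$ from bounding $\|f(x)-\eta\|$). Everything else is a routine re-run of Proposition~\ref{prop:gammasimp}, with the bookkeeping of these constants governed precisely by the defining inequality (\ref{eq:d}).
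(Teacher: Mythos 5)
Your overall strategy is the same as the paper's, but two of your geometric estimates are wrong, and both errors stem from treating points as if they lay on $\cX$ when the noise model only places them near $\cX$.

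First, you claim $\|w - x\| \leq r_\cX$ for $x = \pi_\cX(w)$ ``because $X'$ (hence $w$) lies within $r_\cX$ of $\cX$.'' But $w$ is an arbitrary point of $\supp{\sigma} \subset \cU(X')$, not a sample point: all you know is that $w$ lies within $\epsilon_\cX$ of some $\xi \in X'$, and that $\xi$ lies within $r_\cX$ of $\cX$. Hence the correct bound is $\|w - \pi_\cX(w)\| < \epsilon_\cX + r_\cX$. Second, you apply {\bf Lip'} to conclude $\|f(x) - f(\xi)\| < \kappa(\epsilon_\cX + r_\cX)$ and later assert that $\pi_\cX(\xi_*) = \xi_*$ ``since $\xi_* \in \cX$.'' In the noisy setting $X'$ is drawn from a measure supported in $\Tub_{r_\cX}(\cX)$, so $\xi_* \notin \cX$ in general and $f(\xi_*)$ is not even defined; one must compare $f(\pi_\cX(w))$ with $f(\pi_\cX(\xi_*))$ and pay an additional $\|\xi_* - \pi_\cX(\xi_*)\| < r_\cX$. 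Chaining the corrected inequalities gives $\|\pi_\cX(w) - \pi_\cX(\xi_*)\| < 2(\epsilon_\cX + r_\cX)$ and hence a Lipschitz contribution of $2\kappa(\epsilon_\cX + r_\cX)$, twice what you have. The final bound then becomes $2\kappa(\epsilon_\cX + r_\cX) + r_\cY + 2d$, which is $< \epsilon_\cY$ precisely by (\ref{eq:d}) --- with no slack to spare. The spurious leftover margin of $\kappa(\epsilon_\cX + r_\cX)$ in your computation is exactly the footprint of the missing factor. The conclusion survives because (\ref{eq:d}) was calibrated for the correct constant, but the proof as written rests on two false intermediate claims and must be repaired as above.
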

\begin{proof}
By {\bf Cnd}, {\bf Prb}, {\bf Rad'}, {\bf Nse} and {\bf Smp'}, Proposition \ref{prop:nswprob-noise} holds for $\cX$ and the probability measure $\mu_\cX$. Therefore, with probability exceeding $(1-\delta_\cX)$ we are guaranteed that $X'$ is $r_\cX$-dense in $\cX$ via Proposition \ref{prop:nswprob-noise} and hence that $\pi_\cX$ is a strong deformation retraction via Proposition \ref{prop:nswdens-noise}. Assuming this density, note that the distance from any $\xi \in X'$ to its nearest neighbor $\pi_\cX(\xi)$ in $\cX$ is at most $r_\cX$, since $\mu_\cX$ is $r_\cX$-conditioned about $\cX$ by {\bf Nse}. Similarly, given any $w \in \cU(X')$, we have $\|w - \pi_\cX(w)\|_{\R^n} < \epsilon_\cX+r_\cX$. For any $w \in \supp{\sigma}$ and $\xi \in \bv_{X'}(\sigma)$, we have $\|w-\xi\|_{\R^n} < \epsilon_\cX$ by definition, so we may make the following estimate:
\begin{align*}
\|\pi_\cX(w)-\pi_\cX(\xi)\|_{\R^n} &\leq \|\pi_\cX(w)-w\|_{\R^n}+\|w-\xi\|_{\R^n} + \|\xi-\pi_\cX(\xi)\|_{\R^n} \\
																	 &< (\epsilon_\cX+r_\cX) + \epsilon_\cX + r_\cX \\
																	 &= 2(\epsilon_\cX+r_\cX).
\end{align*}
Write $x = \pi_\cX(w)$, and observe by {\bf Lip'} that
\[
\|f(x) - f\circ\pi_\cX(\xi)\|_{\R^m} < 2\kappa(\epsilon_\cX + r_\cX).
\]
By (\ref{eq:d}) we know $\|f'(\xi) - f\circ\pi_\cX(\xi)\|_{\R^m} < d$, so we obtain
\begin{align}
\label{eqn:fxcont-noise}
\|f(x) - f'(\xi)\|_{\R^m} < 2\kappa(\epsilon_\cX + r_\cX) + d.
\end{align}
Pick any $\eta \in \Delta_\sigma$, so by (\ref{eqn:delta'}) there is some $\xi_* \in \bv_{X'}(\sigma)$ with $\eta \in \Delta'(\xi_*)$, whence $\|\eta - f'(\xi_*)\|_{\R^m} < \rho'$ by (\ref{eqn:Ldef'}). Since (\ref{eqn:fxcont-noise}) implies $\|f(x) - f'(\xi_*)\|_{\R^m} < 2\kappa(\epsilon_\cX + r_\cX) + d$, the triangle inequality followed by (\ref{eq:rho'}) yields
\[
\|f(x) - \eta\|_{\R^m} < 2\kappa(\epsilon_\cX + r_\cX) + d + \rho' = 2\kappa(\epsilon_\cX+r_\cX) + r_\cY + 2d.
\]
Using (\ref{eq:d}), we finally obtain $\|f(x) - \eta\|_{\R^m} < \epsilon_\cY$. Thus, the intersection $\bigcap_{\eta \in \Delta_\sigma}\cB_{\epsilon_\cY}(\eta)$ is non-empty: it contains $f(x)$, and therefore determines a simplex of $N(Y')$. Any subset of $\Delta'_\sigma$ determines a face of this simplex and must also be a simplex in its own right.
\end{proof}

The proof of Theorem \ref{thm:finalresnoise} concludes with the observation that Propositions \ref{prop:nonempty-noise} and \ref{prop:gammasimp-noise} permit -- with probability larger than $(1-\delta_\cX)(1-\delta_\cY)$ -- the construction of a selector $h':X' \to Y'$ for $\Delta'$ which satisfies $h'(\xi) \in \Delta'(\xi)$. It is easy to check that $\iota_\cY \circ f \circ \pi_\cX\left(\cB_{\epsilon_\cX}(\xi)\right) \subset \cB_{\epsilon_\cY}(h'(\xi))$ for any such $h'$, so the induced simplicial map $\phi_{h'}:N(X') \to N(Y')$ is a simplicial reconstruction of $f$ as desired.

\section{A Proof of the Controlled Nerve Lemma}\label{sec:blocknerve}

In this section we prove Lemma \ref{lem:contner}, which was used to control the maps $\zeta_Z: |N(Z)| \to \cU(Z)$ for $Z \in \setof{X,Y}$ in the proof of Proposition \ref{prop:rechom}.  

\subsection{General Considerations}

The {\em mapping cylinder} $M(\gamma)$ of a continuous function $\gamma:\cU \to \cV$ between topological spaces is given by
\[
M(\gamma) = \frac{\left(\cU \times [0,1]\right) \coprod \cV}{(x,0) \sim \gamma(x)}.
\]
In other words, the mapping cylinder of $\gamma$ consists of the product of its domain $\cU$ with $[0,1]$ glued onto the co-domain $\cV$ via the identification of each $(x,1)$ with the image $\gamma(x)$. It is easy to see that there is a strong deformation retraction from $M(\gamma)$ to the codomain $\cV$ obtained by sliding each $(x,t) \in \cU \times [0,1]$ to the endpoint $\gamma(x)$. If $\gamma$ is a homotopy-equivalence, then $M(\gamma)$ also admits a strong deformation retraction to the domain $\cU \simeq \cU \times \setof{0} \subset M(\gamma)$.

\begin{defn}
{\em
A metrizable topological space $\cV$ is an {\em absolute neighborhood retract} (henceforth abbreviated $\ANR$) if for each triple $(\cU,\cA,\psi)$ of metrizable space $\cU$, closed subset $\cA \subset \cU$ and continuous function $\psi:\cA \to \cV$, there exists a pair $(\cA',\psi')$ consisting of an open set $\cA'$ with $\cA \subset \cA' \subset \cU$ and a map $\psi':\cA' \to \cV$ whose restriction $\psi'|_\cA$ to $\cA$ equals $\psi$.
}
\end{defn}

Throughout the sequel, every $\ANR$ is assumed to be {\em finite-dimensional} in the sense that it admits a continuous embedding into $\R^m$ for some $m \geq 0$. We will highlight all those basic facts about finite-dimensinal $\ANR$s which are relevant to our proofs and refer the reader to \cite[Ch.\ 11]{granas:dugundji} for details and proofs. Recall that a pair $(\cU,\cA)$ of topological spaces satisfies the {\em homotopy extension property} if for any triple $(\cV,\omega_t,\gamma)$ of topological space $\cV$, homotopy $\omega_t:\cA \to \cY$ and function $\gamma:\cU \to \cV$ with $\gamma|_{\cA} \equiv \omega_0$, there exist continuous functions $\omega_t':\cU \to \cV$ so that $\omega_0' \equiv \gamma$ on $\cU$ and $\omega'_t|_\cA \equiv \omega_t$ for each $t \in [0,1]$.

We make use of the following facts in our next lemma:
\begin{enumerate}
\item the mapping cylinder of a continuous function between $\ANR$s is also an $\ANR$, and
\item any pair $(\cU,\cA)$ of compact $\ANR$s with $\cA$ is closed in $\cU$ satisfies the homotopy extension property.
\end{enumerate} 

\begin{lem} 
\label{lem:mapcyl}
Let $\gamma:(\cU,\cA) \to (\cV,\cB)$ be a map of $\ANR$ pairs with $\cA$ and $\cB$ closed in $\cU$ and $\cV$, such that $\gamma:\cU \to \cV$ and $\gamma|_\cA: \cA \to \cB$ are homotopy-equivalences. Then, $\gamma$ is a homotopy-equivalence of pairs.
\end{lem}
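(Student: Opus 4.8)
The plan is to prove that $\gamma : (\cU,\cA) \to (\cV,\cB)$ is a homotopy-equivalence of pairs by a mapping-cylinder argument, reducing the relative statement to the absolute statements we already have in hand. First I would replace $\gamma$ by the inclusion of $\cU$ into the mapping cylinder $M(\gamma)$: since $\gamma|_\cA$ maps $\cA$ into $\cB$, the subcylinder $M(\gamma|_\cA)$ sits inside $M(\gamma)$ as a closed subspace, and we obtain a map of pairs $\bigl(\cU,\cA\bigr) \hookrightarrow \bigl(M(\gamma), M(\gamma|_\cA)\bigr)$ as well as a retraction of pairs $\bigl(M(\gamma), M(\gamma|_\cA)\bigr) \to \bigl(\cV,\cB\bigr)$ given by the standard sliding deformation retraction. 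Because mapping cylinders of maps of $\ANR$s are again $\ANR$s (fact (1) above), all four spaces in sight are (compact, finite-dimensional) $\ANR$s, with the subspace closed in each case. Thus it suffices to show that the inclusion $\cU \hookrightarrow M(\gamma)$ is a homotopy-equivalence of pairs onto $\bigl(M(\gamma), M(\gamma|_\cA)\bigr)$, since the retraction onto $(\cV,\cB)$ is already a deformation retraction of pairs and $\gamma$ factors as this inclusion followed by that retraction up to homotopy of pairs.

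Next I would build the homotopy inverse. Since $\gamma : \cU \to \cV$ is a homotopy-equivalence, the inclusion $\cU \hookrightarrow M(\gamma)$ is a homotopy-equivalence of absolute spaces, so there is a deformation retraction $r_t : M(\gamma) \to M(\gamma)$ with $r_0 = \mathrm{id}$ and $r_1(M(\gamma)) \subset \cU$. The issue is that $r_t$ need not carry the subcylinder $M(\gamma|_\cA)$ into itself, nor into $\cA$. To fix this I would first use that $\cA \hookrightarrow M(\gamma|_\cA)$ is a homotopy-equivalence (because $\gamma|_\cA : \cA \to \cB$ is) to get a deformation retraction $s_t : M(\gamma|_\cA) \to M(\gamma|_\cA)$ with $s_1(M(\gamma|_\cA)) \subset \cA$. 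Then I would invoke the homotopy extension property for the pair $\bigl(M(\gamma), M(\gamma|_\cA)\bigr)$ (fact (2)) to extend $s_t$ to a homotopy $\tilde s_t$ of $M(\gamma)$ starting at the identity; composing with $r_t$ appropriately and making one more HEP correction, one arranges a homotopy $H_t$ of $M(\gamma)$ with $H_0 = \mathrm{id}$, $H_1(M(\gamma)) \subset \cU$, and $H_t(M(\gamma|_\cA)) \subset M(\gamma|_\cA)$ for all $t$, with $H_1(M(\gamma|_\cA)) \subset \cA$. The standard bookkeeping lemma here — that a homotopy-equivalence $\gamma$ of spaces together with a homotopy-equivalence on a closed "nice" subspace, such that the inclusion pairs all have HEP, upgrades to an equivalence of pairs — is exactly the content being claimed, and the mapping-cylinder reformulation makes it a matter of gluing deformation retractions via HEP.

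I expect the main obstacle to be the \emph{compatibility} of the two deformation retractions: $r_t$ pushes $M(\gamma)$ into $\cU$ while $s_t$ pushes $M(\gamma|_\cA)$ into $\cA$, and these two flows are a priori unrelated. The careful point is to perform the retraction of the subspace first, extend it over the total space by HEP (this is where closedness of $\cA$ in $\cU$, hence of $M(\gamma|_\cA)$ in $M(\gamma)$, is essential), and only then retract the total space rel the already-corrected subspace; one must check that the relative retraction can be taken to fix the subspace or at least to keep it inside itself throughout, which again uses HEP for the pair. Once the homotopy $H_1$ lands $M(\gamma)$ in $\cU$ and $M(\gamma|_\cA)$ in $\cA$, the map $H_1 : \bigl(M(\gamma), M(\gamma|_\cA)\bigr) \to (\cU,\cA)$ is a homotopy inverse of pairs to the inclusion — the two composites are homotopic to the respective identities through homotopies of pairs by construction — and composing with the deformation retraction $\bigl(M(\gamma),M(\gamma|_\cA)\bigr) \to (\cV,\cB)$ gives the desired homotopy inverse of pairs to $\gamma$. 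This completes the proof.
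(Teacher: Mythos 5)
Your proposal is correct and follows essentially the same route as the paper: pass to the mapping cylinder pair $(M(\gamma), M(\gamma|_\cA))$, first deformation retract the subcylinder onto $\cA$, extend that homotopy over all of $M(\gamma)$ via the homotopy extension property for the (compact $\ANR$, closed subspace) pair, and then concatenate with the deformation retraction of $M(\gamma)$ onto $\cU$, noting that the latter fixes $\cU \supset \cA$ so the subcylinder stays where it was pushed. The compatibility issue you flag is resolved exactly by this ordering, which is the paper's $\theta_t$.
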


\begin{proof} Let $\omega_t:M(\gamma|_\cA)\to M(\gamma|_\cA)$ be a strong deformation retraction from $M(\gamma|_\cA)$ to $\cA$. By the homotopy extension property, we can construct a map $\omega'_t:M(\gamma) \to M(\gamma)$ extending $\omega_t$ so that $\omega'_0|_\cU$ is the identity on $\cU$. Let $\nu_t:M(\gamma)\to M(\gamma)$ be a strong deformation retraction from $M(\gamma)$ to $\cU$. Define 
\begin{align*}
\theta_{t}= \begin{cases} 
								\omega'_{2t} & 0 \leq t \leq \nicefrac{1}{2} \\ 
								\nu_{2t-1}\circ \omega'_{1} & \nicefrac{1}{2} \leq t \leq 1
				\end{cases}.
\end{align*}
This is a strong deformation retraction of pairs from $(M(\gamma),\,M(\gamma|_\cA))$ to $(\cU,\cA)$. Therefore, $(\cU,\cA)$ is homotopy-equivalent as a pair to $(M(\gamma),\,M(\gamma|_\cA))$, which in turn is homotopy-equivalent as a pair to $(\cV,\cB)$.
\end{proof}

As a corollary of the proof, we see that any strong deformation retraction of $M(\gamma|_\cA)$ to $\cA$ can be extended to a strong deformation retraction of $M(\gamma)$ to $\cU$ with a new parameterization. The given strong deformation retraction over $\cA$ takes place in the first half of the interval [0,1] and remains stationary thereafter.  

\begin{prop} 
Let $\cL$ be a topological space, $N$ a finite simplicial complex and $\rho:\cL \to |N|$ a continuous surjection. If $\cL^\sigma := \rho^{-1}(|\sigma|)$ is a contractible $\ANR$ for each simplex $\sigma \in N$, then there is a strong deformation retraction from $M(\rho)$ to $\cL$ whose restriction  to the mapping cylinder of $\rho|_{\cL^\sigma}:\cL^\sigma \to |\sigma|$ is a strong deformation retraction onto $\cL^\sigma$ for each $\sigma \in N$.
\end{prop}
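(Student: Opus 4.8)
The plan is to induct on the simplices of $N$ ordered by dimension (equivalently, build the retraction over successive skeleta $N^{(d)}$), using Lemma~\ref{lem:mapcyl} as the inductive engine. The statement is the "parametrized over $|N|$" refinement of the observation following Lemma~\ref{lem:mapcyl}: a strong deformation retraction of $M(\rho|_{\cL^\sigma})$ onto $\cL^\sigma$ extends to one of $M(\rho)$ onto $\cL$. Here we want all these extensions to be mutually compatible as $\sigma$ ranges over $N$, so that the global retraction restricts correctly on each $M(\rho|_{\cL^\sigma})$.

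\medskip

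\emph{First} I would record the relevant $\ANR$ bookkeeping. For each $\sigma \in N$ the space $\cL^\sigma = \rho^{-1}(|\sigma|)$ is a compact (finite-dimensional) contractible $\ANR$ by hypothesis, and for a face $\tau \subset \sigma$ the subspace $\cL^\tau$ is closed in $\cL^\sigma$ since $|\tau|$ is closed in $|\sigma|$; moreover $\cL^\sigma$ is closed in $\cL$. Hence every pair $(\cL^\sigma, \cL^{\partial\sigma})$ — where $\cL^{\partial\sigma} := \bigcup_{\tau \subsetneq \sigma}\cL^\tau = \rho^{-1}(|\partial\sigma|)$ — is a pair of compact $\ANR$s with the subspace closed, and so satisfies the homotopy extension property by fact~(2) preceding Lemma~\ref{lem:mapcyl}. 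The mapping cylinders $M(\rho|_{\cL^\sigma})$ are $\ANR$s by fact~(1).

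\medskip

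\emph{The induction.} Over the $0$-skeleton, each vertex $v$ gives $|v|$ a point and $\cL^v$ contractible, so there is a strong deformation retraction of $M(\rho|_{\cL^v})$ onto $\cL^v$; take any such, independently for each vertex. For the inductive step, suppose we have built on $M(\rho|_{\cL^{N^{(d-1)}}})$ a strong deformation retraction $\Theta^{(d-1)}$ onto $\cL^{N^{(d-1)}}$ restricting correctly on each $M(\rho|_{\cL^\tau})$ with $\dim\tau \le d-1$. Fix a $d$-simplex $\sigma$. Its boundary contributes the already-constructed retraction on $M(\rho|_{\cL^{\partial\sigma}})$ onto $\cL^{\partial\sigma}$; since $\rho|_{\cL^\sigma}:\cL^\sigma \to |\sigma|$ is a map between contractible $\ANR$s (both are contractible, hence the map is a homotopy equivalence), and its restriction to the closed subpair $\cL^{\partial\sigma} \to |\partial\sigma|$ is also a homotopy equivalence (source and target are contractible: $\cL^{\partial\sigma}$ is contractible because $\cL^\sigma$ deformation retracts onto it via the previously built data, and $|\partial\sigma| \simeq S^{d-1}$ — wait, this is precisely where care is needed, see below), we invoke the "corollary of the proof" of Lemma~\ref{lem:mapcyl}: the strong deformation retraction already living on $M(\rho|_{\cL^{\partial\sigma}})$ extends, via the homotopy extension property for the pair $\bigl(M(\rho|_{\cL^\sigma}),\,M(\rho|_{\cL^{\partial\sigma}})\bigr)$, to a strong deformation retraction of $M(\rho|_{\cL^\sigma})$ onto $\cL^\sigma$ that is stationary on the boundary cylinder after reparametrization. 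Doing this simultaneously over all $d$-simplices (the extensions agree on overlaps because they all restrict to the fixed boundary data, and distinct $d$-simplices meet only along their common boundary) and gluing with $\Theta^{(d-1)}$ produces $\Theta^{(d)}$ on $M(\rho|_{\cL^{N^{(d)}}})$. Since $N$ is finite, after finitely many steps we reach $\Theta := \Theta^{(\dim N)}$ on $M(\rho) = M(\rho|_{\cL^{|N|}})$ onto $\cL$, restricting on each $M(\rho|_{\cL^\sigma})$ to a strong deformation retraction onto $\cL^\sigma$ as required.

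\medskip

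\emph{The main obstacle} is the point flagged above: Lemma~\ref{lem:mapcyl} needs $\rho|_{\cL^{\partial\sigma}}:\cL^{\partial\sigma}\to|\partial\sigma|$ to be a homotopy equivalence, but $|\partial\sigma|$ is a sphere $S^{d-1}$, \emph{not} contractible, so I cannot simply quote "map of contractible $\ANR$s". What I actually need is the extension statement in the form of the corollary following Lemma~\ref{lem:mapcyl}, which only requires that $M(\rho|_{\cL^\sigma})$ strong-deformation-retract onto $\cL^\sigma$ (true, as $\rho|_{\cL^\sigma}$ is a homotopy equivalence of contractible spaces) and that the pair have the homotopy extension property (true). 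Concretely: let $\omega_t$ be the strong deformation retraction of $M(\rho|_{\cL^{\partial\sigma}})$ onto $\cL^{\partial\sigma}$ from the previous stage; extend it by the homotopy extension property to $\omega'_t$ on $M(\rho|_{\cL^\sigma})$ with $\omega'_0 = \mathrm{id}$; let $\nu_t$ be the canonical strong deformation retraction of $M(\rho|_{\cL^\sigma})$ onto $\cL^\sigma$ (slide along cylinder lines); and reparametrize
\[
\Theta^\sigma_t = \begin{cases} \omega'_{2t} & 0 \le t \le \nicefrac12,\\ \nu_{2t-1}\circ\omega'_1 & \nicefrac12 \le t \le 1,\end{cases}
\]
exactly as in the proof of Lemma~\ref{lem:mapcyl}. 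One must check (a) this is a strong deformation retraction onto $\cL^\sigma$ — which follows since $\nu_t$ fixes $\cL^\sigma$ pointwise and $\omega'_1$ maps $M(\rho|_{\cL^{\partial\sigma}})$ into $\cL^{\partial\sigma}\subset\cL^\sigma$ — and (b) its restriction to $M(\rho|_{\cL^{\partial\sigma}})$ is $\omega_t$ in the first half and then stationary (as $\nu_t$ fixes $\cL^{\partial\sigma}$), so it agrees with the previously built retraction up to the harmless reparametrization, keeping the global gluing consistent. The only genuinely technical chore is making the reparametrizations uniform across all simplices of a given dimension so the pieces glue to a continuous $\Theta^{(d)}$; since the cutpoint $\nicefrac12$ and the doubling are the same for every $\sigma$, this is automatic, and continuity of the glued map on the finite CW-type space $M(\rho|_{\cL^{N^{(d)}}})$ follows from continuity on each closed piece $M(\rho|_{\cL^\sigma})$ together with agreement on intersections.
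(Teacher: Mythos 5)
Your proof is correct and follows essentially the same route as the paper's: induction over the skeleta of $N$, with the inductive step carried out by extending the boundary retraction through the homotopy extension property and the reparametrization trick from the proof of Lemma~\ref{lem:mapcyl}. You also correctly identify and resolve the one subtlety the paper glosses over, namely that $|\partial\sigma|$ is not contractible, so one must invoke the ``corollary of the proof'' of Lemma~\ref{lem:mapcyl} (extension of a given strong deformation retraction of $M(\gamma|_\cA)$ onto $\cA$) rather than the lemma's literal hypothesis that $\gamma|_\cA$ be checked to be a homotopy equivalence by contractibility of both sides.
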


\begin{proof} We proceed by induction on skeleta of $N$ using Lemma \ref{lem:mapcyl}. Pick any $0$-dimensional simplex $\sigma \in N^0$. Now, $\rho|_{\cU^\sigma}:\cU^\sigma \to |\sigma|$ is a homotopy-equivalence by assumption, and therefore $\cU^\sigma$ is a strong deformation retract of the mapping cylinder $M(\rho|_{\cU^\sigma})$. The disjoint union of such strong deformation retractions as $\sigma$ ranges over $N^0$ yields a strong deformation retraction over all $N^0$, thus establishing the base case of our induction. Assume the inductive hypothesis for the $(d-1)$-skeleton of $N$ for some $d \geq 1$. For each $\sigma \in N^d$, we have a strong deformation retraction from the mapping cylinder $M(\rho|_{\cL^\sigma})$ to $\cL^\sigma$ since the restriction of $\rho$ to $\cL^\sigma$ is a homotopy-equivalence. On the other hand, the inductive hypothesis guarantees a strong deformation retraction from the mapping cylinder $M(\rho|_{\partial\sigma})$ to $\partial\sigma$, where $\partial\sigma$ denotes the boundary of $\sigma$ in $N$. Lemma \ref{lem:mapcyl} now gives us a strong deformation retraction from $M(\rho|_{\cL^\sigma})$ to $\cL^\sigma$ extending the strong deformation retraction over $\partial\sigma$. Piecing these extensions together for all $\sigma \in N^d$ gives the desired result.
\end{proof}

The following corollary is immediate; it plays a central role in our proof of the controlled nerve lemma.

\begin{cor}
\label{cor:blocks}
Let $\cL$ be a topological space, $N$ a finite simplicial complex and $\rho:\cL \to |N|$ a continuous surjection. If $\rho^{-1}(|\sigma|)$ is a contractible $\ANR$ for each $\sigma \in N$, then $\rho$ admits a homotopy-inverse $\iota:|N| \to \cL$ so that $\iota(|\sigma|) \subset \rho^{-1}(|\sigma|)$ for each $\sigma \in N$.
\end{cor}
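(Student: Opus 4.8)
The plan is to deduce the corollary directly from the preceding proposition, using only the two standard mapping-cylinder facts recorded at the start of this section. Write $\cL^\sigma := \rho^{-1}(|\sigma|)$, and let $j_\cL:\cL \hookrightarrow M(\rho)$ and $j_N:|N| \hookrightarrow M(\rho)$ be the canonical inclusions of, respectively, the free end of the cylinder and the base $|N|$. The proposition provides a strong deformation retraction $\Theta_t : M(\rho) \to M(\rho)$ with $\Theta_0 = \mathrm{id}$, with $\Theta_1 = j_\cL \circ r$ for a continuous retraction $r: M(\rho) \to \cL$ (so $r\circ j_\cL = \mathrm{id}_\cL$), and with the additional feature that for each $\sigma \in N$ the restriction of $\Theta_t$ to the sub-mapping-cylinder $M(\rho|_{\cL^\sigma}) \subset M(\rho)$ is a strong deformation retraction onto $\cL^\sigma$. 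I would then simply set $\iota := r \circ j_N : |N| \to \cL$.

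Next I would check that $\iota$ is a homotopy-inverse of $\rho$. Recall the always-available strong deformation retraction of $M(\rho)$ onto its base, giving a retraction $q : M(\rho) \to |N|$ with $q\circ j_N = \mathrm{id}_{|N|}$ and $q \circ j_\cL = \rho$, and with $j_N \circ \rho \simeq j_\cL$ inside $M(\rho)$ (slide along the cylinder coordinate). Composing $\Theta_t$ with $q$ shows $\rho\circ r = q\circ j_\cL\circ r \simeq q$, whence $\rho\circ\iota = \rho\circ r\circ j_N \simeq q\circ j_N = \mathrm{id}_{|N|}$; and $\iota\circ\rho = r\circ j_N\circ\rho \simeq r\circ j_\cL = \mathrm{id}_\cL$. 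Thus $\iota$ and $\rho$ are mutually homotopy-inverse.

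Finally, the containment $\iota(|\sigma|)\subset\cL^\sigma$ is exactly where the strengthened conclusion of the proposition enters. For any $\sigma \in N$, the base inclusion $j_N$ sends $|\sigma|$ into the subspace $M(\rho|_{\cL^\sigma}) \subset M(\rho)$; since the proposition guarantees that $\Theta_1$ carries $M(\rho|_{\cL^\sigma})$ into $\cL^\sigma$ and $\Theta_1 = j_\cL\circ r$, we obtain $\iota(|\sigma|) = r\bigl(j_N(|\sigma|)\bigr) \subset r\bigl(M(\rho|_{\cL^\sigma})\bigr) \subset \cL^\sigma = \rho^{-1}(|\sigma|)$, as required.

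The only point I would spell out with any care is the identification of $M(\rho|_{\cL^\sigma})$ as a genuine subspace of $M(\rho)$ whose base part is precisely $j_N(|\sigma|)$, so that the ``restricts to a strong deformation retraction onto $\cL^\sigma$'' clause of the proposition applies verbatim. Everything else is routine bookkeeping with mapping-cylinder retractions; consistent with the remark in the text, the corollary is essentially a translation of the proposition from the language of deformation retractions to that of homotopy inverses, and presents no real obstacle.
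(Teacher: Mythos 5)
Your argument is correct and is precisely the intended one: the paper declares the corollary ``immediate'' from the preceding proposition, and what you have written out --- taking $\iota = r\circ j_N$ for the retraction $r:M(\rho)\to\cL$ supplied by the proposition, verifying the two homotopies via the two ends of the mapping cylinder, and extracting the containment $\iota(|\sigma|)\subset\rho^{-1}(|\sigma|)$ from the fact that the deformation retraction restricts to each sub-cylinder $M(\rho|_{\cL^\sigma})$ --- is exactly the standard bookkeeping the authors omit. Your closing caveat about identifying $M(\rho|_{\cL^\sigma})$ as a (closed) subspace of $M(\rho)$ with base $j_N(|\sigma|)$ is the right thing to check and is unproblematic here since $|\sigma|$ is closed in $|N|$ and hence $\cL^\sigma$ is closed in $\cL$.
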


\subsection{Specific Considerations}

Let $\U$ be a finite collection of open balls in Euclidean space $\R^n$ whose union is denoted by $\cU$. Let $N(\U)$ be the nerve of the cover of $\cU$ by the balls in $\U$. We remark that a finite union of balls in $\R^n$ is an $\ANR$, and so is the geometric realization of a finite simplicial complex. Note also that $\ANR$s are closed under the operation of taking finite products. 

The following consequence of the main theorem from \cite{viet-smale} will be used frequently in order to establish homotopy-equivalences.
\begin{thm}[\bf The Vietoris-Smale Theorem]
Let $\gamma:\cU \to \cV$ be a proper surjective continuous map of connected, locally contractible separable metric spaces. If the fiber $\gamma^{-1}(v) \subset \cU$ is contractible for each $v \in \cV$, then the map induced by $\gamma$ on the homotopy groups of $\cU$ and $\cV$ is an isomorphism.  
\end{thm}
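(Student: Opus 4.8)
The plan is to recognize this as the Vietoris--Smale theorem and to follow the structure of Smale's argument in \cite{viet-smale}. Since $\cU$ and $\cV$ are connected and locally contractible they are path connected (indeed locally path connected), so the assertion about $\pi_*$ is meaningful once compatible basepoints are fixed. The strategy is: reduce the theorem to a single lifting-up-to-homotopy criterion, verify that criterion by a local-to-global argument over a fine triangulation, and isolate the one genuinely hard ingredient, namely a connectivity estimate for preimages of small neighbourhoods of a fibre.

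First I would reduce to the following claim: for every $r\ge 0$ and every pair of maps $a\colon S^{r-1}\to\cU$ and $b\colon D^{r}\to\cV$ with $\gamma\circ a=b|_{S^{r-1}}$, there is a map $\tilde b\colon D^{r}\to\cU$ with $\tilde b|_{S^{r-1}}=a$ and $\gamma\circ\tilde b\simeq b$ rel $S^{r-1}$ (for $r=0$ this says the path component of $b(D^{0})$ in $\cV$ is in the image of $\gamma$). A standard obstruction argument shows that this claim, for all $r$, is equivalent to $\gamma_*$ being a bijection on $\pi_0$ and an isomorphism on every $\pi_r$: surjectivity on $\pi_r$ is the case of constant $a$, and injectivity is the claim one dimension higher applied to a nullhomotopy of $\gamma\circ\alpha$ with prescribed boundary $\alpha$.

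The heart of the proof is a local lemma, and this is where properness and contractibility of the fibres are used. For each $v\in\cV$ and each open $U\supseteq\gamma^{-1}(v)$ I would produce an open $W\ni v$ with $\emptyset\ne\gamma^{-1}(W)\subseteq U$ such that every map $S^{k}\to\gamma^{-1}(W)$ extends over $D^{k+1}$ inside $\gamma^{-1}(U)$, for all $k$. Properness yields the inclusion $\gamma^{-1}(W)\subseteq U$ (the tube lemma for proper maps), and the connectivity assertion follows from $\gamma^{-1}(v)=\bigcap_{W}\gamma^{-1}(W)$ by pushing a sphere in a small enough $\gamma^{-1}(W)$ into an arbitrarily small neighbourhood of the fibre and then capping it off there, using that $\gamma^{-1}(v)$ is contractible and that $\cU$ is locally contractible (so that, in the relevant cases, the fibres are neighbourhood retracts). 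To globalise, given a square $(a,b)$ as above, I would cover the compact set $b(D^{r})$ by finitely many two-scale pairs $W_i\subseteq U_i$ supplied by the local lemma, choose a triangulation $T$ of $D^{r}$ refining $S^{r-1}$ and fine enough that $b$ sends each closed star into a single $W_i$, and then build $\tilde b$ by induction on skeleta: keep $\tilde b=a$ on $S^{r-1}$, send new vertices into the relevant $\gamma^{-1}(W_i)$, and extend over each $j$-simplex $\Delta$ with $j\le r$ and already-mapped boundary by applying the local lemma's nullhomotopy inside $\gamma^{-1}(U_i)$. A simplex-by-simplex homotopy, valid because $\tilde b(\Delta)\subset\gamma^{-1}(U_i)$ with $U_i$ contractible, then assembles to $\gamma\circ\tilde b\simeq b$ rel $S^{r-1}$, verifying the criterion.

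The main obstacle is the local lemma: extracting from ``proper with contractible fibres'' the fact that preimages of small neighbourhoods of a fibre are arbitrarily highly connected. This is precisely Smale's notion of an $n$-connected map, here specialised to $n=\infty$, and it is where the hypotheses that the spaces are locally contractible and separable metric genuinely matter, via the limiting argument $\gamma^{-1}(v)=\bigcap_{W}\gamma^{-1}(W)$. The remaining pieces --- the reduction of the first step and the triangulation induction --- are routine; the only mildly delicate bookkeeping is the two-scale matching, ensuring the $\tilde b$-image of each face of a simplex lands in the $\gamma^{-1}$ of the next-larger member of the cover, which is handled as in the classical local-to-global arguments for weak equivalences.
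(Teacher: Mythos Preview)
The paper does not prove this theorem; it is quoted as a known result and attributed to \cite{viet-smale}, with no argument given beyond that citation. So there is no proof in the paper to compare your proposal against.

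That said, your sketch is a faithful outline of Smale's original argument: the reduction to the relative-lifting criterion, the local connectivity lemma for preimages of small neighbourhoods of a fibre (which is exactly Smale's notion of an $n$-connected map specialised to all $n$), and the skeleton-by-skeleton induction over a fine triangulation are all present in \cite{viet-smale}. One point to be careful with is your parenthetical ``so that, in the relevant cases, the fibres are neighbourhood retracts'': Smale does not assume or deduce that $\gamma^{-1}(v)$ is a neighbourhood retract. The local lemma is obtained instead directly from local contractibility of $\cU$ together with the limiting expression $\gamma^{-1}(v)=\bigcap_W \gamma^{-1}(W)$ and compactness of the fibre (via properness), by first compressing a given sphere into a small neighbourhood of the fibre and then using contractibility of the fibre together with local contractibility of the ambient space to cap it off. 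If you keep that distinction straight, the rest of your outline is correct.
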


In order to use this theorem, one must construct proper functions\footnote{Recall that $\gamma:\cU \to \cV$ is {\em proper} if $\gamma^{-1}(K) \subset \cV$ is compact whenever $K \subset \cU$ is compact.} and hence deal with closed rather than open balls. 
\begin{rem} 
{\em
We may modify $\U$ into a finite collection of closed balls without altering the homotopy type of $N(\U)$. For each top-dimensional simplex $\sigma \in N(\U)$, fix a witness point $x_\sigma \in \supp{\sigma}$ and note by openness that there exists some $d_\sigma > 0$ so that the ball of radius $d_\sigma$ around $x_\sigma$ lies entirely in $\supp{\sigma}$. Now, set 
\[
d_\U = \min_\sigma d_\sigma,
\] 
where $\sigma$ ranges over all top-dimensional simplices in $N(\U)$. Pick any $d \in (0,d_\U)$ and replace each open ball $u \in \U$ by a closed ball $u'$ with the same center and with radius shrunk by $d$. This modification preserves the non-emptiness of all intersections. The contractibility of the new closed intersections follows immediately from the fact that they are convex sets.
}
\end{rem}

In light of the preceding remark, we assume that $\U$ is a finite collection of closed balls in $\R^n$ and hence that the union $\cU$ is also closed. Our goal is to establish the existence of a homotopy-equivalence $\zeta:N(\U) \to \cU$ so that $\zeta(|\sigma|)$ is contained in the union of balls $\capp{\sigma}$. To this end, consider the subset $\cL$ of the product $|N(\U)| \times \cU$ defined as follows. We include $(\alpha,x)$ in $\cL$ if and only if there exists some simplex $\sigma \in N(\U)$ with $\alpha \in |\sigma|$ and $x \in \supp{\sigma}$. 

We now have surjective, continuous and proper projection maps from $\cL$ onto each of the constituent factors of the product:
\[
\begin{diagram}
\dgARROWLENGTH 0.5in
\node{}
\node{\cL}\arrow{sw,t}{\rho_1} \arrow{se,t}{\rho_2}
\node{}
\\
\node{|N(\U)|}
\node{}
\node{M}
\end{diagram}
\]

\begin{prop}
The maps $\rho_1$ and $\rho_2$ are homotopy-equivalences.
\end{prop}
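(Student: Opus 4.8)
The plan is to apply the Vietoris--Smale Theorem to each of $\rho_1$ and $\rho_2$ separately. Both maps are already asserted to be surjective, continuous and proper, and both $|N(\U)|$ and $\cU$ are connected, locally contractible separable metric spaces (finite simplicial complexes and finite unions of closed balls in $\R^n$ are $\ANR$s, hence locally contractible, and they are obviously separable metric); $\cL$ inherits these properties as a closed subset of the product $|N(\U)| \times \cU$, which is again a finite-dimensional $\ANR$. So the only thing that needs genuine work is the contractibility of the point-fibers of $\rho_1$ and of $\rho_2$.

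First I would analyze the fiber $\rho_1^{-1}(\alpha)$ for a point $\alpha \in |N(\U)|$. There is a unique smallest simplex $\sigma_\alpha \in N(\U)$ whose realization contains $\alpha$ in its interior, namely $\sigma_\alpha = \setof{u \in N(\U)^0 \mid \alpha(u) \neq 0}$; a simplex $\tau$ satisfies $\alpha \in |\tau|$ if and only if $\sigma_\alpha \subset \tau$. By definition of $\cL$, the fiber $\rho_1^{-1}(\alpha)$ is $\setof{\alpha} \times \bigl(\bigcup_{\tau \supset \sigma_\alpha} \supp{\tau}\bigr)$. Since the supports are nested downward under inclusion of simplices, $\supp{\tau} \subset \supp{\sigma_\alpha}$ for every $\tau \supset \sigma_\alpha$, so this union is exactly $\supp{\sigma_\alpha}$, which is a non-empty intersection of convex sets and therefore convex, hence contractible. (Here I use the convention that $\sigma_\alpha$ itself is a simplex of $N(\U)$, which holds because $\alpha$ lies in $|N(\U)| = \bigcup_{\sigma} |\sigma|$.) This handles $\rho_1$.

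Next I would analyze the fiber $\rho_2^{-1}(x)$ for a point $x \in \cU$. By definition, $(\alpha,x) \in \cL$ exactly when $\alpha \in |\sigma|$ for some simplex $\sigma$ with $x \in \supp{\sigma}$; equivalently, writing $\sigma_x \subset \U$ for the collection of all balls in the cover containing $x$, the condition is that $\alpha \in |\tau|$ for some face $\tau \subset \sigma_x$. But $\sigma_x$ is itself a simplex of $N(\U)$ (its support contains $x$, hence is non-empty), so the set of admissible $\alpha$ is precisely $\bigcup_{\tau \subset \sigma_x} |\tau| = |\sigma_x|$, the realization of a single simplex. Thus $\rho_2^{-1}(x) = |\sigma_x| \times \setof{x}$, which is contractible since the realization of a simplex is convex. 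This handles $\rho_2$, and the proposition follows by two applications of the Vietoris--Smale Theorem.

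The main obstacle, such as it is, is bookkeeping rather than mathematics: one must be careful about the interplay between the "smallest carrying simplex" of a barycentric point and the downward-versus-upward nesting of the maps $\sigma \mapsto |\sigma|$ (monotone increasing) versus $\sigma \mapsto \supp{\sigma}$ (monotone decreasing), since it is precisely this opposition of directions that collapses each seemingly complicated union of products into a single convex piece. I would also take care to note explicitly why $\cL$, as a closed subset of a compact (or at least locally compact, metrizable) $\ANR$, is itself a locally contractible separable metric space so that the Vietoris--Smale hypotheses genuinely apply; this is where the remarks preceding the proposition about finite unions of closed balls and realizations of finite complexes being $\ANR$s, and $\ANR$s being closed under finite products, get used.
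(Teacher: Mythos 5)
Your proposal is correct and follows essentially the same route as the paper: apply the Vietoris--Smale theorem to each projection, identify $\rho_1^{-1}(\alpha)$ with $\setof{\alpha}\times\supp{\sigma_\alpha}$ for the minimal carrying simplex $\sigma_\alpha$ and $\rho_2^{-1}(x)$ with $|\sigma_x|\times\setof{x}$ for the maximal simplex supported at $x$, and conclude contractibility of the fibers from convexity. Your explicit remarks on why the unions collapse (the opposing monotonicity of $\sigma\mapsto|\sigma|$ and $\sigma\mapsto\supp{\sigma}$) and on verifying the Vietoris--Smale hypotheses for $\cL$ are slightly more detailed than the paper's treatment but do not constitute a different argument.
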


\begin{proof}
Since all spaces in sight certainly satisfy the requirements of the Vietoris-Smale theorem, it suffices to check that the fibers of $\rho_1$ and $\rho_2$ are contractible. First consider the fiber $\rho_1^{-1}(\alpha)$ over some barycentric function $\alpha \in |N(\U)|$. By finiteness of the cover $\U$, there is a unique simplex $\sigma_\alpha \in N(\cU)$ of minimal dimension so that $\alpha \in |\sigma_\alpha|$. By definition, we have
\begin{align*}
\rho_1^{-1}(\alpha) &= \setof{(\alpha,x) \mid x \in \supp{\sigma} \text{ for some }\sigma \supset \sigma_\alpha}\\
									 &= \setof{\alpha} \times \bigcup_{\sigma \supset \sigma_\alpha} \supp{\sigma}\\
									 &= \setof{\alpha} \times \supp{\sigma_\alpha}.
\end{align*}
The second factor $\supp{\sigma_\alpha}$ is contractible, being an intersection of convex sets -- and therefore, so is the fiber $\rho_1^{-1}(\alpha)$. Applying the Vietoris-Smale theorem concludes this half of the proof.

On the other hand, pick any $x \in \cU$ and consider the fiber $\rho_2^{-1}(x) \subset \cL$. Since the cover $\U$ is finite by assumption, there is a unique simplex $\sigma_x \in N(\U)$ of maximal dimension satisfying $x \in \supp{\sigma_x}$. By definition, we have
\begin{align*}
\rho_2^{-1}(x) &= \setof{(\alpha,x) \in |\sigma| \times \setof{x} \mid \sigma \in N(\U) \text{ satisfies }x \in \supp{\sigma}} \\
						  &= |\sigma_x| \times \setof{x}
\end{align*} 
Since the realization of any simplex is contractible, the fiber $\rho_2^{-1}(x)$ is the product of contractible sets and hence is contractible itself. By the Vietoris-Smale theorem, $\rho_2$ induces an isomorphism of homotopy groups.
\end{proof}

In order to apply Corollary \ref{cor:blocks}, we re-examine the fibers of $\rho_1$. For each $\sigma \in N(\U)$ one obtains the following: 
\begin{align*}
\rho_1^{-1}(|\sigma|) &= \setof{(\alpha,x) \mid \exists \sigma' \subset \sigma \text{ with }\alpha \in |\sigma'| \text{ and }x \in \supp{\sigma'}} \\
								&= |\sigma| \times \bigcup_{\sigma' \subset \sigma}\supp{\sigma'}\\
								&= |\sigma| \times \capp{\sigma}.
\end{align*} 
The second factor $\capp{\sigma}$ is contractible, being a union of convex sets (i.e., balls in Euclidean space) with a non-empty intersection. Now, $|\sigma| \times \capp{\sigma}$ is a product of contractible $\ANR$s, and hence a contractible $\ANR$. Therefore, $\rho_1$ satisfies the hypotheses of Corollary \ref{cor:blocks} and hence admits a homotopy-inverse $\iota:|N(\U)| \to \cL$ so that 
\[
\iota(|\sigma|) \subset \rho_1^{-1}(|\sigma|) = |\sigma| \times \capp{\sigma}.
\]
Letting $\zeta:|N(\U)| \to \cU$ be the composition $\rho_2\circ\iota$ of homotopy-equivalences gives the desired inclusion $\zeta(|\sigma|) \subset \capp{\sigma}$ and concludes the proof of Lemma \ref{lem:contner}.

%\section{Algorithms and Computation}\label{sec:algos}
		%\input{mh_algos7}
%%%%%%%%%%%%%%%%%%%%%%%%%%%%%%%%%%%%%%%%%%%%%%%%%

%
%  Bibliography
%
\bibliography{mh}
\bibliographystyle{abbrv}
\end{document}